\DeclareRobustCommand\longtwoheadrightarrow
\newtheorem{thm}{Theorem}[section]
\newtheorem*{thm*}{Theorem}
\newtheorem*{prop*}{Proposition}
\newtheorem{prop}[thm]{Proposition}
\newtheorem{lem}[thm]{Lemma}
\newtheorem{con}[thm]{Conjecture}
\numberwithin{equation}{section}
\theoremstyle{definition}
\newtheorem{defi}[thm]{Definition}
\newtheorem*{defi*}{Definition}
\newtheorem*{con*}{Conjecture}
\newtheorem{ex}[thm]{Example}
\newtheorem{rem}[thm]{Remark}
\newcommand{\QQ}{\mathbb{Q}}
\newcommand{\ZZ}{\mathbb{Z}}
\newcommand{\diff}{\mathop{}\!\mathrm{d}}
\newcommand{\nca}[2]{#1\langle #2\rangle}
\newcommand{\ncac}[2]{#1\langle \langle #2\rangle\rangle}
\newcommand{\spanQ}{\operatorname{span}_{\QQ}}
\newcommand{\Sl}{\operatorname{SL}_2(\ZZ)}
\newcommand{\quasimod}{\widetilde{\mathcal{M}}^\QQ(\Sl)}
\newcommand{\conc}{\operatorname{conc}}
\newcommand{\Hom}{\operatorname{Hom}_{\QQ\operatorname{-alg}}}
\newcommand{\id}{\operatorname{id}}
\newcommand{\anti}{\operatorname{anti}}
\newcommand{\wt}{\operatorname{wt}}
\newcommand{\dep}{\operatorname{dep}}
\newcommand{\1}{\mathbf{1}}
\newcommand{\dec}{\Delta_{\operatorname{dec}}}
\newcommand{\B}{\mathcal{B}}
\newcommand{\QB}{\nca{\QQ}{\B}}
\newcommand{\RBc}[1]{#1\langle\langle \B\rangle\rangle}
\newcommand{\Ybi}{\mathcal{Y}^{\operatorname{bi}}}
\newcommand{\QYbi}{\nca{\QQ}{\Ybi}}
\newcommand{\Y}{\mathcal{Y}}
\newcommand{\QY}{\nca{\QQ}{\Y}}
\newcommand{\RYc}[1]{#1\langle\langle \Y\rangle\rangle}
\newcommand{\X}{\mathcal{X}}
\newcommand{\QX}{\nca{\QQ}{\X}}
\newcommand{\RXc}[1]{#1\langle\langle \X\rangle\rangle}
\newcommand{\Z}{\mathcal{Z}}
\newcommand{\Zf}{\mathcal{Z}^f}
\newcommand{\co}{\Delta_{\shuffle}}
\newcommand{\stco}{\Delta_\ast}
\newcommand{\DM}{\operatorname{DM}}
\newcommand{\DMo}{\operatorname{DM}_0}
\newcommand{\dmo}{\mathfrak{dm}_0}
\newcommand{\bco}{\Delta_b}
\newcommand{\rel}{\operatorname{Rel}_{\tau,0}}
\newcommand{\BM}{\operatorname{BM}}
\newcommand{\BMo}{\operatorname{BM}_0}
\newcommand{\bmo}{\mathfrak{bm}_0}
\newcommand{\nonformal}{\mathcal{Z}_q}
\newcommand{\formal}{\mathcal{G}^{\operatorname{f}}}
\newcommand{\formalo}{\mathcal{G}^{\operatorname{f},0}}
\newcommand{\symb}{\operatorname{f}}
\newcommand{\zq}{\zeta_q}
\newcommand{\SZ}{\zeta_q^{\operatorname{SZ}}}
\newcommand{\zqreg}{\zeta_q^{\operatorname{reg}}}
\newcommand{\bst}{\ast_b}
\newcommand{\regT}{\operatorname{reg}_T}
\newcommand{\reg}{\operatorname{reg}}
\title{A generalization of formal multiple zeta values related to \\ multiple Eisenstein series and multiple $q$-zeta values}
\author{Annika Burmester}
\address{Faculty of Mathematics, Bielefeld University, Germany.}
\email{aburmester@math.uni-bielefeld.de}
\subjclass[2020]{
	11M32, % multizeta values
	16T05, % Hopf algebras and their applications
	05A30% q-calculus and related topics
}
\keywords{multiple zeta values, multiple Eisenstein series, multiple q-zeta values, quasi-shuffle Hopf algebras, affine schemes}
\thanks{The author was partially funded by the Deutsche Forschungsgemeinschaft (DFG, German Research Foundation) -- SFB-TRR 358/1 2023 — 491392403.}
\begin{document} 
	
\maketitle

\begin{abstract} \noindent
We present the $\tau$-invariant balanced quasi-shuffle algebra $\formal$, whose elements formalize (combinatorial) multiple Eisenstein series as well as multiple $q$-zeta values. In particular, $\formal$ has natural maps into these two algebras, and we expect these maps to be isomorphisms. 
Racinet studied the algebra $\Zf$ of formal multiple zeta values by examining the corresponding affine scheme $\DM$. Similarly, we present the affine scheme $\BM$ corresponding to the algebra $\formal$. 
We show that Racinet's affine scheme $\DM$ embeds into our affine scheme $\BM$. This leads to a projection from the algebra $\formal$ onto $\Zf$. Via the above natural maps, this projection corresponds to extracting the constant terms of multiple Eisenstein series or the limit $q\to1$ of multiple $q$-zeta values.
\end{abstract} 

%\begin{abstract} \noindent
%We introduce the algebra $\formal$, which is equipped with natural maps into the algebra of (combinatorial) multiple Eisenstein series and that of multiple $q$-zeta values. Similar to Racinet's approach to formal multiple zeta values, we consider an affine scheme $\BM$ corresponding to the algebra $\formal$. We show that Racinet's affine scheme $\DM$ represented by the algebra of formal multiple zeta values embeds into this affine scheme $\BM$. This leads to a projection from the algebra $\formal$ onto the algebra of formal multiple zeta values. Via the above natural maps, this projection corresponds to taking the constant terms of multiple Eisenstein series or the limit $q\to1$ of multiple $q$-zeta values.
%\end{abstract} 

\section{Introduction} 

\noindent
\emph{Multiple zeta values} are real numbers defined for integers $k_1\geq2,\ k_2,\ldots,k_d\geq1$ by
\[\zeta(k_1,\ldots,k_d)=\sum_{n_1>\dots > n_d>0} \frac{1}{n_1^{k_1}\cdots n_d^{k_d}}.\]
We refer to the number $k_1+\dots+k_d$ as the \emph{weight} and to the number $d$ as the \emph{depth}. Multiple zeta values were first introduced in depth two by C. Goldbach and L. Euler more than two centuries ago. In recent years these values were studied intensively due to their rich structure and their occurrence in various areas of mathematics, such as number theory, algebraic geometry, knot theory, quantum field theory, and also in mathematical physics. A survey on achievements in the theory of multiple zeta values can be found in \cite{bgf} and \cite{zh}, and all articles related to multiple zeta values are listed in \cite{hw}. \\
The product of multiple zeta values can be expressed in two different ways; one is called the \emph{stuffle product} and comes from the definition of multiple zeta values as infinite nested sums, and the other one is called the \emph{shuffle product} and comes from the representation of multiple zeta values as iterated integrals. Both products possess a description in terms of quasi-shuffle algebras (introduced in \cite{h}, \cite{hi}). Comparing these two product formulas together with some regularization process (given in \cite{ikz}) yields the \emph{extended double shuffle relations} among multiple zeta values. A main conjecture in the theory of multiple zeta values is that the extended double shuffle relations give all algebraic relations in the algebra 
\[\Z=\spanQ\{\zeta(k_1,\ldots,k_d)\mid d\geq0, k_1\geq2,k_2,\ldots,k_d\geq1\}. \] 
This motivates to consider the algebra $\Zf$ of \emph{formal multiple zeta values} spanned by the elements $\zeta^f(k_1,\ldots,k_d)$, $k_1\geq2,k_2,\ldots,k_d\geq1$, which exactly satisfy the extended double shuffle relations. By construction, there is a realization of $\Zf$ into $\Z$. In other words, there is a surjective algebra morphism
\begin{align*}
\Zf&\longtwoheadrightarrow\Z,\\
\zeta^f(k_1,\ldots,k_d)&\longmapsto\zeta(k_1,\ldots,k_d),
\end{align*} 
which should be an isomorphism by the previously mentioned main conjecture.
\vspace{0,2cm} \\
In his thesis (\cite{rac}), Racinet studied the algebraic structure of formal multiple zeta values by considering an affine scheme $\DMo$ represented by the quotient algebra $\Zf/(\zeta^f(2))$. This means, for each commutative $\QQ$-algebra $R$ with unit we have natural bijections
\begin{align} \label{DM represented by intro}
\Hom\Big(\faktor{\Zf\!}{\!(\zeta^f(2))},R\Big)\simeq \DMo(R).
\end{align}
Then, he showed that $\DMo$ is actually an affine group scheme. This yields an isomorphism of algebras
\begin{align*}
\Zf\simeq \QQ[\zeta^f(2)]\otimes_\QQ \mathcal{U}(\dmo)^\vee,
\end{align*}
where $\dmo$ is the corresponding Lie algebra to $\DMo$.
In particular, $\Zf$ is a free polynomial algebra. The Lie bracket on $\dmo$ induces a coproduct on the quotient $\Zf/(\zeta^f(2))$, which is usually referred to as Goncharov's coproduct (\cite{gon}). This coproduct equips $\Zf/(\zeta^f(2))$ with a weight-graded Hopf algebra structure. The purpose of this article is to advocate a generalization of \eqref{DM represented by intro}.
\vspace{0,2cm} \\
A kind of deformation of multiple zeta values are \emph{$q$-analogs of multiple zeta values}; these are elements in $\QQ\llbracket q\rrbracket$ degenerating to multiple zeta values for a suitable limit $q\to 1$. To get nice algebraic structures, we have to restrict to a particular kind of $q$-analog of multiple zeta values. An example for such $q$-analogs are the \emph{Schlesinger-Zudilin multiple $q$-zeta values} first studied in \cite{schl}, \cite{zu}, and then in their extended version in \cite{ems}. These are defined for integers $s_1\geq1, s_2,\ldots,s_l\geq0$ as
\begin{align*}
\SZ(s_1,\ldots,s_l)=\sum_{n_1>\dots > n_l>0} \frac{q^{n_1s_1}}{(1-q^{n_1})^{s_1}}\cdots \frac{q^{n_ls_l}}{(1-q^{n_l})^{s_l}}\in \QQ\llbracket q \rrbracket.
\end{align*}
We call the number $s_1+\dots+s_l+\#\{i\mid s_i=0\}$ the \emph{weight} and the number $l-\#\{i\mid s_i=0\}$ the \emph{depth}. Denote
\begin{align*}
\nonformal=\spanQ\{\SZ(s_1,\ldots,s_l)\mid l\geq0,\ s_1\geq1,\ s_2,\ldots,s_l\geq0\},
\end{align*}
where $\SZ(\emptyset)=1$. Since the Schlesinger-Zudilin multiple $q$-zeta values are defined as infinite nested sums, they also satisfy a stuffle product. For example, we have for $s_1,s_2\geq1$
\begin{align} \label{SZ stuffle intro}
\SZ(s_1)\SZ(s_2)=\SZ(s_1,s_2)+\SZ(s_2,s_1)+\SZ(s_1+s_2).
\end{align}
In particular, the space $\nonformal$ is an algebra. The Schlesinger-Zudilin multiple $q$-zeta values are invariant under a certain involution $\tau$ (see \eqref{SZ tau invariance}), which is closely connected to conjugation of partitions. Precisely, we have for  $k_1,\ldots,k_d\geq1,\ m_1,\ldots,m_d\geq0$
\begin{align} \label{SZ tau invariance intro}
\SZ(k_1,\{0\}^{m_1},\ldots,k_d,\{0\}^{m_d})=\SZ(m_d+1,\{0\}^{k_d-1},\ldots,m_1+1,\{0\}^{k_1-1}).
\end{align}
\iffalse An alternative description of the algebra $\nonformal$ is given by generating series of polynomials evaluated at partitions. With this point of view, the involution $\tau$ corresponds to the conjugation of partitions. Express any partition $\lambda$ of some positive integer $|\lambda|=n$ in its Stanley coordinates, i.e., we write $\lambda=((u_1,\ldots,u_d),(v_1,\ldots,v_d))$ with $u_1>\dots >u_d>0$ and $n=u_1v_1+\dots+u_dv_d$. Denote the set of all these partitions of length $d$ by $\mathcal{P}_d$. Then the elements
\begin{align} \label{gen series partitions intro}
\sum_{\lambda=((u_1,\ldots,u_d),(v_1,\ldots,v_d))\in \mathcal{P}_d} u_1^{m_1}v_1^{l_1}\cdots u_d^{m_d}v_d^{l_d} q^{|\lambda|}, \qquad l_1,m_1,\ldots,l_d,m_d\geq0,
\end{align} 
also span the space $\nonformal$, since these elements are essentially equal to the bi-brackets (studied in \cite{ba}, \cite{bk}). The elements in \eqref{gen series partitions intro} are evidently invariant under conjugation of partitions. The exact translation of this invariance into the spanning set of $\nonformal$ consisting of the Schlesinger-Zudilin multiple $q$-zeta values is \eqref{SZ tau invariance intro}. \fi
\begin{con*} All algebraic relations in $\nonformal$ are a consequence of the stuffle product formula \eqref{SZ stuffle intro} and the $\tau$-invariance \eqref{SZ tau invariance intro} of the Schlesinger-Zudilin multiple $q$-zeta values. 
\end{con*}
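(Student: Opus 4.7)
Since the statement is a conjecture, not a theorem with an expected short proof, I can only sketch a strategy one would pursue to attack it, and highlight the inevitable obstruction.

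The natural first step is to make the conjecture precise as the injectivity of an explicit surjection. Let $\formal$ (or a suitable weight-graded candidate) be the $\QQ$-algebra generated by formal symbols $\SZ^{\operatorname{f}}(s_1,\ldots,s_l)$ modulo the ideal $I$ generated by \eqref{SZ stuffle intro} together with the $\tau$-invariance \eqref{SZ tau invariance intro}; equivalently, one models $\formal$ as a quasi-shuffle algebra on an alphabet indexed by $\mathcal{Y}$-letters and then quotients by the $\tau$-relations. The first task is to verify that $I$ is compatible with both the product and a weight grading, so that $\formal$ inherits a graded algebra structure and the evaluation map $\formal \twoheadrightarrow \nonformal$, sending $\SZ^{\operatorname{f}}\mapsto\SZ$, is a well-defined surjective morphism of graded $\QQ$-algebras. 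The conjecture is then equivalent to the injectivity of this morphism.

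The strategy I would adopt, mirroring Racinet's treatment of $\DMo$ recalled in \eqref{DM represented by intro}, is to study $\formal$ through its functor of points. Build an affine scheme $\BM$ so that $\Hom(\formal,R)\simeq \BM(R)$ for every commutative $\QQ$-algebra $R$, translate the stuffle relation into a group-like/coaction condition on a generating series $\Phi\in \RYc{R}$, and translate the $\tau$-relations into an equivariance condition under an explicit involution on that series. Then one tries to promote $\BM$ to an affine group scheme, obtain a Hopf structure on a natural quotient (killing weight-one or weight-two generators as in Racinet's $\DMo$), and use standard structure theory to describe $\formal$ up to isomorphism as the graded dual of the enveloping algebra of a pro-nilpotent Lie algebra $\bmo$. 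From this one reads off weight-by-weight dimensions of $\formal$ and can compare them with upper bounds coming from the bi-bracket description \eqref{gen series partitions intro} and known dimension estimates for $\nonformal$.

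The main obstacle is the last, decisive comparison: controlling the dimensions of $\nonformal$ from below in order to match the dimensions of $\formal$. This is exactly the type of transcendence/linear-independence question that is out of reach for classical multiple zeta values, and since taking the $q\to 1$ limit degenerates \eqref{SZ stuffle intro} and \eqref{SZ tau invariance intro} to a system of relations conjecturally equivalent to the extended double shuffle relations, any proof of the conjecture would in particular establish the main conjecture that $\Zf\simeq\Z$. Thus the realistic goals are partial: verify the conjecture in low weight by direct computation, prove that $\formal$ has the expected Hopf-algebraic shape, and deduce consequences (such as the projection onto $\Zf$ advertised in the abstract) conditional on the conjecture; a full proof is beyond what current techniques can deliver.
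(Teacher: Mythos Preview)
The statement is an open conjecture and the paper offers no proof; you correctly recognise this and sketch a programme rather than a proof. That programme---encode the relations as an affine scheme $\BM$ represented by a formal algebra, relate it to Racinet's $\DM$, and seek a Lie-algebraic description---is essentially what the paper pursues in Sections~\ref{affine scheme BM} and~\ref{BM and DM} and in the outlook, so your proposal is well aligned with the paper.

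One point needs correction. You propose to ``verify that $I$ is compatible with \ldots\ a weight grading'', but the paper explicitly notes that the SZ-stuffle product \eqref{SZ stuffle intro} is \emph{not} weight-homogeneous: whenever a factor involves an entry $s_i=0$, lower-weight terms appear. This is precisely why the paper replaces the SZ-stuffle by its associated weight-graded product, the balanced quasi-shuffle $\bst$, and defines $\formal$ as the quotient of $(\QB,\bst)$ by $\tau$-invariance together with $b_0=0$ (Definition~\ref{def formal qMZV}). Thus the formal algebra representing $\BM$ is not the quotient by the literal relations of the conjecture, and your generating series should live in $\RBc{R}$ rather than $\RYc{R}$. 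With that adjustment your sketch matches the paper; your diagnosis that the remaining obstruction is a transcendence problem of the same nature as $\Zf\simeq\Z$ is reasonable, though the precise implication you assert would itself require justification.
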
 \noindent
This conjecture was first stated by Bachmann in the context of bi-brackets. One easily verifies that the $\tau$-invariance in \eqref{SZ tau invariance intro} is homogeneous in weight, but the stuffle product in \eqref{SZ stuffle intro} is not. In analogy to the multiple zeta values and to apply similar techniques as in Racinet's approach to formal multiple zeta values, we are more interested in weight-homogeneous relations. Precisely, we want to replace the stuffle product of the Schlesinger-Zudilin multiple $q$-zeta values with the associated weight-graded product. In \cite{bu2}, a spanning set of $\nonformal$ is constructed, which indeed satisfies the associated weight-graded product and the $\tau$-invariance of the Schlesinger-Zudilin multiple $q$-zeta values. The elements of this spanning set are called \emph{balanced multiple $q$-zeta values}. 

\noindent \vspace{-0,2cm} \\ 
Another family of objects related to multiple zeta values are the \emph{multiple Eisenstein series} introduced in \cite{gkz}. For integers $k_1\geq3, k_2,\ldots,k_d\geq2$ these are given by
\[\mathbb{G}_{k_1,\ldots,k_d}(z)=\sum_{\substack{\lambda_1\succ\cdots\succ \lambda_d\succ 0 \\ \lambda_i\in \ZZ z+\ZZ}} \frac{1}{\lambda_1^{k_1}\cdots \lambda_d^{k_d}} \qquad (z\in \mathbb{H}).\]
Here we set $m_1z+n_1\succ m_2z+n_2$ if $m_1>m_2$ or $(m_1=m_2 \wedge n_1>n_2)$. Every multiple Eisenstein series $\mathbb{G}_{k_1,\ldots,k_d}(z)$ possesses a Fourier expansion, where the constant term is exactly the multiple zeta value $\zeta(k_1,\ldots,k_d)$. More precisely, the building blocks of the Fourier expansion are multiple zeta values and so-called \emph{brackets} studied in \cite{bk2}. The brackets are contained in $\nonformal$; thus with the identification $q=e^{2\pi i z}$ we have
\begin{align*}
\mathbb{G}_{k_1,\ldots,k_d}(z)\in \nonformal \otimes \Z[\pi i].
\end{align*}
There exist stuffle and shuffle regularized multiple zeta values for all positive multi indices $(k_1,\ldots,k_d)\in \mathbb{N}^d$, and an extension of the brackets, called \emph{bi-brackets} (see \cite{ba}). These two extended families of objects allow to define stuffle and shuffle regularized multiple Eisenstein series $\mathbb{G}_{k_1,\ldots,k_d}(z)$ for all $k_1,\ldots,k_d\geq 1$ (see \cite{ba3}, \cite{bt}). Computer experiments by Bachmann and Tasaka in \cite{bt} suggest that the relations between (regularized) multiple Eisenstein series are equal to the relations between brackets modulo lower weight. \\
To describe the relations among the brackets one needs the bi-brackets. It is conjectured by Bachmann that brackets and bi-brackets actually span the same space, namely $\nonformal$. 
In \cite{bb} a spanning set of $\nonformal$ is constructed which conjecturally satisfies exactly the relations of the bi-brackets modulo lower weight. Those are called \emph{combinatorial bi-multiple Eisenstein series} 
\[G\binom{k_1,\ldots,k_d}{m_1,\ldots.m_d}\in \nonformal, \qquad k_1,\ldots,k_d\geq1,\ m_1,\ldots,m_d\geq0.\] 
In summary, the combinatorial bi-multiple Eisenstein series should explain all algebraic relations between multiple Eisenstein series.
\iffalse These elements are built from the bi-brackets and a rational solution to the extended double shuffle equations. In \cite{bk}, it is shown that the bi-brackets also form a spanning set for $\nonformal$, and it is conjectured by Bachmann that the brackets are enough to span $\nonformal$. The algebra $\formal$ introduced in the following, is a formalization of the expected relations between the combinatorial bi-multiple Eisenstein series, and hence also should describe the relations between multiple Eisenstein series. \fi
\vspace{0,3cm} \\
%In this work, we introduce the algebra $\formal$, which should be seen as a formalization of the algebra $\nonformal$ in the above sense. In particular, the algebra $\formal$ should describe all algebraic relations between (combinatorial) multiple Eisenstein series, multiple $q$-zeta values, and polynomial functions on partitions simultaneously. Furthermore, we show that $\formal$ is also a generalization of the algebra $\Zf$ of formal multiple zeta values. \\
\textbf{Main results.} Consider the alphabet $\B=\{b_0,b_1,b_2,\ldots\}$ and let $\bst$ be the quasi-shuffle product on the non-commutative free algebra $\QB$ recursively given by $\1\bst w=w\bst\1=w$ and
\begin{align*}
b_iu\bst b_jv=b_i(u\bst b_jv)+b_j(b_iu\bst v)+\begin{cases} b_{i+j}(u\bst v),&\quad i,j\geq1, \\ 0 &\quad \text{else}, \end{cases}
\end{align*}
for all $u,v,w\in \QB$. We refer to $\bst$ as the \emph{balanced quasi-shuffle product}. 
\iffalse \begin{prop*}[Proposition \ref{dual Hopf algebra}] The tuple $(\QB,\bst,\dec)$ is a graded commutative Hopf algebra. The dual completed Hopf algebra is $(\ncac{R}{\B},\conc,\bco)$, where $R$ is some $\QQ$-algebra and the coproduct $\bco$ is on the generators given by
\[\bco(b_i)=\1\otimes b_i+b_i\otimes \1+\sum_{j=1}^{i-1}b_j\otimes b_{i-j}, \qquad i\geq0.\]
\end{prop*} \noindent \fi 
On the subspace \[\QB^0=\QB\backslash b_0\QB,\] 
we define the involution $\tau$ by
\begin{align*}
\tau(b_{k_1}b_0^{m_1}\cdots b_{k_d}b_0^{m_d})=b_{m_d+1}b_0^{k_d-1}\cdots b_{m_1+1}b_0^{k_1-1}
\end{align*}
for all $k_1,\ldots,k_d\geq1,\ m_1,\ldots,m_d\geq0$. 
\begin{defi*} We set
\begin{align*}
\formal=\faktor{(\QB,\bst)}{\rel},
\end{align*}
where $\rel$ is the ideal generated by the elements $b_0$ and $\tau(w)-w$ for each $w\in \QB^0$. We denote the class of an element $w\in \QB$ by $\symb(w)$. A \emph{realization} of $\formal$ is a surjective algebra morphism $\formal\to R$ into some $\QQ$-algebra $R$.
\end{defi*} 
\begin{thm*}[Theorem \ref{realization of formal}] There is a realization of $\formal$ into the algebra $\nonformal$ given by
\begin{align*}
\formal&\longtwoheadrightarrow \nonformal, \\
\symb(w)&\longmapsto\zqreg(w).
\end{align*}
\end{thm*}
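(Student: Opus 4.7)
The plan is to define an explicit $\QQ$-linear map $\zqreg : (\QB, \bst) \to \nonformal$ and then factor it through the quotient $\formal = (\QB, \bst)/\rel$. For the construction of $\zqreg$, I would invoke the \emph{balanced multiple $q$-zeta values} constructed in \cite{bu2}: for each word $w = b_{s_1} b_{s_2} \cdots b_{s_l} \in \QB$, set $\zqreg(w)$ to be the associated balanced multiple $q$-zeta value in $\nonformal$, with $\zqreg(b_0) = 0$ and $\zqreg(\1) = 1$. The main properties established in \cite{bu2} are precisely the three ingredients needed here: the balanced multiple $q$-zeta values (i) satisfy the balanced quasi-shuffle product, so $\zqreg$ is an algebra morphism with respect to $\bst$; (ii) inherit the $\tau$-invariance \eqref{SZ tau invariance intro} of the Schlesinger-Zudilin values; and (iii) span $\nonformal$ as a $\QQ$-vector space.

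With $\zqreg$ in hand, the next step is to verify that it annihilates the defining ideal $\rel$. For the generator $b_0$, the vanishing is by definition, and since $\zqreg$ is multiplicative with respect to $\bst$, the whole $\bst$-ideal generated by $b_0$ lies in the kernel; concretely, $\zqreg(b_0 \bst u) = \zqreg(b_0)\,\zqreg(u) = 0$ for every $u \in \QB$. For the generators $\tau(w) - w$ with $w \in \QB^0$, the vanishing is the $\tau$-invariance from property (ii). Hence $\zqreg$ descends to a well-defined algebra morphism $\formal \to \nonformal$ satisfying $\symb(w) \mapsto \zqreg(w)$.

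Surjectivity then follows immediately from property (iii): the balanced multiple $q$-zeta values already span $\nonformal$, so the image of $\zqreg$ exhausts the target. The main substance of the proof therefore is not in the categorical argument above but in the underlying construction of the balanced multiple $q$-zeta values in \cite{bu2}; once those values are available with properties (i)--(iii), the theorem reduces to invoking the universal property of the quotient $(\QB, \bst)/\rel$. The only conceptual subtlety to keep track of is that the balanced quasi-shuffle product $\bst$ is the weight-graded associated product of the Schlesinger--Zudilin stuffle, and not the full stuffle itself; matching $\bst$ rather than the full stuffle is precisely what the construction in \cite{bu2} is engineered to achieve, and this is where all the real work sits.
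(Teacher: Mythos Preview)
Your overall strategy is right—invoke the balanced multiple $q$-zeta values from \cite{bu2} and then factor through the quotient—but you are glossing over the one non-trivial step in the argument. The balanced multiple $q$-zeta values $\zq(s_1,\ldots,s_l)$ constructed in \cite{bu2} (Theorem~\ref{thm balanced} in the paper) are only defined for words in $\QB^0$, i.e.\ for $s_1\geq 1$; they give a $\tau$-invariant algebra morphism $(\QB^0,\bst)\to\nonformal$, not one on all of $\QB$. Your phrase ``for each word $w\in\QB$, set $\zqreg(w)$ to be the associated balanced multiple $q$-zeta value, with $\zqreg(b_0)=0$'' does not actually specify a map: for a word like $b_0b_1$ there is no balanced multiple $q$-zeta value to assign, and declaring $\zqreg(b_0)=0$ alone does not determine $\zqreg(b_0b_1)$ in a way compatible with $\bst$. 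Indeed, naively setting $\zqreg(w)=0$ for every word starting with $b_0$ would \emph{fail} to be a $\bst$-morphism, since e.g.\ $b_0\bst b_1=b_0b_1+b_1b_0$ forces $\zqreg(b_0b_1)=-\zq(1,0)$, which need not vanish.

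The paper fills this gap by an explicit regularization: one uses the algebra isomorphism $\regT:\QB^0[T]\to(\QB,\bst)$, $wT^n\mapsto w\bst b_0^{\bst n}$ from \cite[Proposition 6.2]{bu2}, inverts it, and evaluates at $T=0$ to obtain an algebra morphism $\reg:(\QB,\bst)\to(\QB^0,\bst)$ with $\reg(b_0)=0$. Then $\zqreg:=\zq\circ\reg$ is the desired $\bst$-algebra morphism on all of $\QB$, sending $b_0$ to $0$ and agreeing with $\zq$ on $\QB^0$ (hence $\tau$-invariant there). After that, your universal-property argument goes through verbatim. So the architecture of your proof is correct, but the construction of $\zqreg$ on $\QB\setminus\QB^0$ is not ``by definition''—it requires the regularization isomorphism, and that is where the substance of this particular theorem lies.
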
 \noindent
The elements $\zqreg(w)$ are the \emph{regularized balanced multiple $q$-zeta values}. We expect the realization $\formal\to\nonformal$ in Theorem \ref{realization of formal} to be an isomorphism, meaning that all algebraic relations in $\nonformal$ come from the balanced quasi-shuffle product formula and the $\tau$-invariance of the balanced multiple $q$-zeta values.\\
Denote by $\formalo$ the subalgebra generated by the elements $\symb(w)$, where $w\in \QB$ is a word consisting of the letters $b_i$, $i\geq1$. Then, a candidate for a realization into the algebra $\mathcal{E}$ of multiple Eisenstein series is given by
\begin{align*}
\formalo &\longtwoheadrightarrow \mathcal{E}, \\
\symb(b_{k_1}\cdots b_{k_d})&\longmapsto\mathbb{G}_{k_1,\ldots,k_d}(z).
\end{align*}
Here the images are the stuffle regularized multiple Eisenstein series. By the previously mentioned conjecture of Bachmann (see also Conjecture \ref{conj G^{f,0} equals G^f}), the algebra $\formalo$ should coincide with $\formal$.
\vspace{0,2cm} \\
Inspired by Racinet's ideas for formal multiple zeta values (\cite{rac}), we introduce an affine scheme $\BM:\QQ\operatorname{-Alg}\to\operatorname{Sets}$ having values in the complete Hopf algebra $(\ncac{R}{\B},\conc,\bco)$ (given in Proposition \ref{dual Hopf algebra}), and show the following.
\begin{thm*}[Theorem \ref{BM affine scheme}] The affine scheme $\BM$ is represented by the algebra $\formal$, i.e., there are natural bijections
\[\Hom(\formal,R)\simeq \BM(R)\]
for each commutative $\QQ$-algebra $R$ with unit.
\end{thm*}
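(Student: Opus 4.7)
The plan is to exploit the duality between the graded Hopf algebra $(\QB, \bst, \dec)$ and its completed dual $(\ncac{R}{\B}, \conc, \bco)$ established in Proposition~\ref{dual Hopf algebra}. The basic fact to invoke is the standard correspondence, for a graded connected commutative Hopf algebra, between $R$-valued characters and group-like elements in the completed dual: a $\QQ$-algebra morphism $\phi : (\QB, \bst) \to R$ corresponds to the series
\[
\Phi \;=\; \sum_{w \in \B^*} \phi(w)\, w \;\in\; \ncac{R}{\B},
\]
and the $\bst$-homomorphism property of $\phi$ is equivalent to $\bco(\Phi) = \Phi\,\hat{\otimes}\,\Phi$. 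This already gives a natural bijection between $\Hom((\QB, \bst), R)$ and the set of $\bco$-group-like elements of $\ncac{R}{\B}$.

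First I would unpack the definition of $\BM(R)$ and isolate its three defining conditions on a series $\Phi \in \ncac{R}{\B}$: (i) group-likeness with respect to $\bco$, (ii) an equivariance condition encoding the $\tau$-invariance, and (iii) a vanishing condition on the $b_0$-coefficient. Condition~(i), by the duality above, identifies the pool of candidate $\Phi$'s with $\Hom_{\QQ\operatorname{-alg}}((\QB,\bst), R)$.

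Next I would show that conditions (ii) and (iii) translate under this identification into the requirements $\phi(b_0) = 0$ and $\phi(\tau(w) - w) = 0$ for every $w \in \QB^0$. The $b_0$-part is immediate from $\phi(b_0) = \langle \Phi, b_0\rangle$. For the $\tau$-part, after spelling out the dual action $\tau^\vee$ on $\ncac{R}{\B}$ (which is again an involution permuting the coefficients of $\Phi$ according to the defining rule of $\tau$), the equivariance condition on $\Phi$ becomes exactly $\phi \circ \tau = \phi$ on $\QB^0$. Consequently, $\BM(R)$ corresponds bijectively to those characters of $(\QB, \bst)$ that annihilate the ideal $\rel$ generated by $b_0$ and by $\tau(w) - w$ for $w \in \QB^0$; by the universal property of the quotient, these are precisely $\Hom(\formal, R)$.

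The main obstacle is the clean translation of the $\tau$-condition from the series side to the algebraic side: one must verify that the dual of $\tau$ is compatible with the truncation to $\QB^0$ (the extension to all of $\QB$ involves $b_0$, so the $b_0$-vanishing condition must be used in tandem), and that the equivariance condition built into $\BM$ captures nothing stronger than $\tau$-invariance on $\QB^0$. Once this is in place, naturality in $R$ is automatic because each step — the duality pairing, the group-like characterization, and the two evaluation conditions — is functorial in the coefficient algebra, so the bijection $\Hom(\formal, R) \simeq \BM(R)$ is a natural isomorphism of functors on commutative $\QQ$-algebras.
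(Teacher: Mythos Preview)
Your proposal is correct and follows essentially the same route as the paper: build the bijection $\varphi \mapsto \sum_{w\in\B^*}\varphi(\symb(w))\,w$, use the duality of $\bst$ and $\bco$ to match the algebra-morphism property with group-likeness, and then translate the two remaining conditions into $\varphi(b_0)=0$ and $\varphi\circ\tau=\varphi$ on $\QB^0$, so that the quotient by $\rel$ gives exactly $\Hom(\formal,R)$. One small simplification: your worry about needing the $b_0$-vanishing ``in tandem'' with the $\tau$-condition is unnecessary, since condition~(iii) in the definition of $\BM(R)$ is already stated as $\tau(\Pi_0(\Phi))=\Pi_0(\Phi)$, and $\tau$ permutes the monomial basis of $\QB^0$; hence it unwinds directly to $\varphi(\tau(w))=\varphi(w)$ for $w\in\QB^0$ without any appeal to $(\Phi\mid b_0)=0$.
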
 \noindent
The affine scheme $\BM$ generalizes Racinet's affine scheme $\DM$. Precisely, we have an injective morphism of affine schemes
\begin{align*}
\theta: \DM\longhookrightarrow \BM.
\end{align*}
Applying Yoneda's Lemma to the morphism $\theta$ gives the following main result.
\begin{thm*}[Theorem \ref{surjection G^f to Z^f}] There is a surjective algebra morphism
\begin{align*}
p:\formal\longtwoheadrightarrow \Zf.
\end{align*}
\end{thm*}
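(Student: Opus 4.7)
The plan is to apply the Yoneda lemma directly to the injective morphism of affine schemes $\theta: \DM \hookrightarrow \BM$ stated immediately above the theorem. First, I would invoke Racinet's representability result in its full form: extending the relation \eqref{DM represented by intro} for $\DMo$, the scheme $\DM$ is represented by $\Zf$ itself, so that $\DM(R) \simeq \Hom(\Zf, R)$ naturally in $R$. Combined with Theorem \ref{BM affine scheme}, both $\DM$ and $\BM$ are representable functors, and hence the natural transformation $\theta$ corresponds under Yoneda to a uniquely determined algebra morphism
\[
p:\formal\longrightarrow\Zf
\]
in the opposite direction. Concretely, $p$ is computed by setting $R=\Zf$, letting $\pi_0\in\DM(\Zf)$ be the point corresponding to $\mathrm{id}_{\Zf}$, and then defining $p:=\theta_{\Zf}(\pi_0)\in\BM(\Zf)\simeq\Hom(\formal,\Zf)$.

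For surjectivity, I would argue as follows. The morphism $\theta$ is not only injective on $R$-points for every $R$, but, by its construction, realizes $\DM$ as a closed subscheme of $\BM$ (namely, the closed subscheme cut out by the shuffle-type relations imposed on top of the balanced quasi-shuffle defining $\formal$). In the representable setting, a closed immersion of affine schemes translates precisely to a surjection of coordinate algebras, which gives the surjectivity of $p$. Alternatively — and more concretely — one can check surjectivity by showing that the algebra generators $\zeta^f(k_1,\ldots,k_d)$ for $k_1\geq2,\ k_2,\ldots,k_d\geq1$ of $\Zf$ all lie in the image of $p$; the natural candidates for preimages are the classes $\symb(b_{k_1}\cdots b_{k_d})\in\formalo\subseteq\formal$, and the expected identity is
\[
p\bigl(\symb(b_{k_1}\cdots b_{k_d})\bigr)=\zeta^f(k_1,\ldots,k_d).
\]

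The main obstacle is to pin down $\theta$ explicitly enough to verify this computation. Unwinding Yoneda, one must trace the identity point $\pi_0\in\DM(\Zf)$ — which is the universal group-like series in $\Zf\langle\langle x_0,x_1\rangle\rangle$ whose coefficients are the formal multiple zeta values — through the construction of $\theta$ to produce a group-like series in $\ncac{\Zf}{\B}$ for the coproduct $\bco$. The expected combinatorial dictionary is that the MZV-letter $x_1 x_0^{k-1}$ corresponds to the balanced letter $b_k$, so that admissible words in the $b_i$ with $i\geq1$ map directly to their MZV counterparts; this identification, once established, simultaneously produces the formula for $p$ on generators and verifies surjectivity. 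The $b_0$-letters and the $\tau$-invariance encode the additional bi-indexed freedom present in $\nonformal$ but collapsed in the $q\to1$ limit, and they contribute nothing new to the image in $\Zf$.
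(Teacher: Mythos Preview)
Your approach is essentially the same as the paper's: apply Yoneda to the natural transformation $\theta:\DM\hookrightarrow\BM$, trace the identity point $\mathrm{id}_{\Zf}\in\Hom(\Zf,\Zf)\simeq\DM(\Zf)$ through $\theta$, and read off $p$ from the resulting element of $\BM(\Zf)\simeq\Hom(\formal,\Zf)$. The paper does exactly this, using the explicit formula $\theta(\phi)=\theta_\X^{\anti}(\phi)\,\theta_\Y(\phi_\ast)$ from Theorem~\ref{DM into BM}; the computation yields
\[
p(\symb(w))=\sum_{\substack{w=uv\\ u\in\{b_0,b_1\}^*,\ v\in\{b_i\mid i\ge1\}^*}}\zeta^f_{\shuffle}\bigl((\theta_\X^{\anti})^{-1}(u)\bigr)\,\zeta^f_\ast\bigl(\theta_\Y^{-1}(v)\bigr),
\]
and for $k_1\ge2$, $k_i\ge1$ the only such factorisation of $b_{k_1}\cdots b_{k_d}$ is $u=\1$, $v=b_{k_1}\cdots b_{k_d}$, confirming your expected identity $p(\symb(b_{k_1}\cdots b_{k_d}))=\zeta^f(k_1,\ldots,k_d)$ and hence surjectivity.

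Two small cautions. First, your ``closed immersion'' argument is not self-evident: injectivity on $R$-points for all $R$ only gives that $p$ is an epimorphism of $\QQ$-algebras, not a surjection, and the paper does not establish that $\theta$ is a closed immersion either --- so your explicit alternative (checking generators) is the argument that actually carries the weight, and it works. Second, your guessed dictionary ``$x_1x_0^{k-1}\leftrightarrow b_k$'' has the letters reversed relative to the paper's convention $x_0^{k-1}x_1\mapsto y_k\mapsto b_k$; this is why $\theta$ involves the \emph{anti}-morphism $\theta_\X^{\anti}$ rather than $\theta_\X$.
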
 \noindent
In other words, there is a realization of the algebra $\formal$ into the algebra $\Z$ of multiple zeta values. The morphism $p$ should be seen as a formal version of the limit $q\to1$ (as computed in \cite[Proposition 11.5, Remark 11.6]{bu2}) or, equivalently, as a formal version of taking the constant term of multiple Eisenstein series.\\
The map $p:\formal\to \Zf$ allows us to recover the extended double shuffle relations in $\Zf$ from the relations satisfied in the algebra $\formal$. So we obtain a new point of view on the extended double shuffle relations, which might give a possibility to make some progress towards the main conjecture for multiple zeta values (Conjecture \ref{Conj all relations in Z}). \\
On the other hand, Theorem \ref{surjection G^f to Z^f} gives evidence that there should be an approach to the algebra $\formal$ similar to the one given by Racinet for the algebra $\Zf$ of formal multiple zeta values (see Section \ref{MZV}). In particular, this paper gives the first step towards the expectation that $\formal$ is a free polynomial algebra and is equipped with a Hopf algebra structure, where the coproduct is a generalization of Goncharov's coproduct.

\noindent \vspace{-0,5cm} \\
\paragraph{\textbf{Structure of the paper.}} In Section \ref{MZV} we briefly recall (formal) multiple zeta values and illustrate Racinet's approach to them via the affine group scheme $\DMo$ and the Lie algebra $\dmo$. In Section \ref{nonformal} we describe the algebra $\nonformal$ and its relation to $q$-analogs of multiple zeta values, and multiple Eisenstein series. Then in Section \ref{formal} we define the algebra $\formal$ and explain the realization into $\nonformal$ given by the balanced multiple $q$-zeta values. In Section \ref{balanced quasi-shuffle Hopf algebra}, we introduce the balanced quasi-shuffle Hopf algebra and its completed dual, which will be necessary for defining the affine scheme $\BM$.  Then in Section \ref{affine scheme BM}, we give the affine scheme $\BM$, which is represented by the algebra $\formal$. Finally, in Section \ref{BM and DM} we prove that we have an embedding $\DM\hookrightarrow\BM$. This leads to the surjective algebra morphism $\formal \twoheadrightarrow \Zf$.

\noindent \vspace{-0,5cm} \\
\paragraph{\textbf{Acknowledgment.}} This work was part of my PhD thesis (\cite{bu}). Therefore, I deeply thank my supervisor Ulf Kühn for many helpful comments and discussions on these contents. Furthermore, I would like to thank Claudia Alfes-Neumann, Henrik Bachmann, Jan-Willem van Ittersum, and Koji Tasaka for valuable comments on these topics within my PhD project and on earlier versions of this paper.

\section{Racinet's approach to multiple zeta values} \label{MZV}

\noindent
We recall Racinet's approach to the algebra of formal multiple zeta values via affine group schemes and Lie algebras. In particular, we associate an affine group scheme $\DMo$ to the algebra of formal multiple zeta values having values in a completed Hopf algebra. Then, we briefly illustrate the structural results for the algebra of formal multiple zeta values obtained from this approach. This will motivate our approach to the algebra $\formal$ in the following sections. We start with a short basic introduction to the theory of multiple zeta values, for details we refer to \cite[Chapter 1]{bgf}.
\begin{defi}  \label{def MZV}
To integers $k_1\geq 2,\ k_2,\ldots,k_d\geq 1$, associate the \emph{multiple zeta value}
\[\zeta(k_1,\ldots,k_d)=\sum_{n_1>\dots>n_d>0} \frac{1}{n_1^{k_1}\cdots n_d^{k_d}} \in \mathbb{R}.\]
The number $k_1+\dots+k_d$ is called the \emph{weight} and the number $d$ is called the \emph{depth}. Denote the $\QQ$-vector space spanned by all multiple zeta values by
\[\Z=\spanQ\{\zeta(k_1,\ldots,k_d)\mid d\geq 0,\ k_1\geq2,\ k_2,\ldots,k_d\geq 1\},\]
where $\zeta(\emptyset)=1$.
\end{defi} \noindent
There are two ways of expressing the product of multiple zeta values; both of them have a description in terms of quasi-shuffle algebras (\cite{h}, \cite{hi}). \\
Consider the alphabet $\Y=\{y_1,y_2,y_3,\ldots\}$. Let $\Y^*$ be the set of all words with letters in $\Y$, and denote by $\QY$ the free non-commutative polynomial algebra over $\QQ$ generated by $\Y$. We denote the empty word by $\1$. Define the \emph{stuffle product} $\ast$ on $\QY$ recursively by $\1\ast w=w\ast\1=w$ and
\begin{align*}
y_iu\ast y_jv=y_i(u\ast y_jv)+y_j(y_iu\ast v)+y_{i+j}(u\ast v)
\end{align*}
for all $u,v,w\in \QY$. Then there is a surjective algebra morphism
\begin{align} \label{stuffle mzv}
(\QY,\ast)&\longrightarrow \Z, \\
y_{k_1}\cdots y_{k_d}&\longmapsto \zeta_{\ast}(k_1,\ldots,k_d).
\end{align}
The elements $\zeta_{\ast}(k_1,\ldots,k_d)$ are the \emph{stuffle regularized multiple zeta values}; they are uniquely determined by $\zeta_{\ast}(k_1,\ldots,k_d)=\zeta(k_1,\ldots,k_d)$ for $k_1\geq2$, $k_2,\ldots,k_d\geq1$ and $\zeta_{\ast}(1)=0$.
\\
Consider the alphabet $\X=\{x_0,x_1\}$. Denote by $\X^*$ the set of all words with letters in $\X$, and let $\QX$ be the free non-commutative polynomial algebra over $\QQ$ generated by $\X$. The \emph{shuffle product} on $\QX$ is recursively defined by $\1\shuffle w=w\shuffle \1=w$ and 
\begin{align*}
x_iu\shuffle x_jv=x_i(u\shuffle x_jv)+x_j(x_iu\shuffle v)
\end{align*}
for all $u,v,w\in \QX$. Using the iterated integral expression of the multiple zeta values, one obtains a surjective algebra morphism
\begin{align} \label{shuffle mzv}
(\QX,\shuffle)&\longrightarrow \Z, \\
x_{\varepsilon_1}\cdots x_{\varepsilon_n}&\longmapsto \zeta_{\shuffle}(x_{\varepsilon_1}\cdots x_{\varepsilon_n}).
\end{align}
The elements $\zeta_{\shuffle}(x_{\varepsilon_1}\cdots x_{\varepsilon_n})$ are the \emph{shuffle regularized multiple zeta values}; they are uniquely determined by $\zeta_{\shuffle}(x_0^{k_1-1}x_1\cdots x_0^{k_d-1}x_1)=\zeta(k_1,\ldots,k_d)$ for all $k_1\geq2$, $k_2,\ldots,k_d\geq1$ and $\zeta_{\shuffle}(x_0)=\zeta_{\shuffle}(x_1)=0$.
\\
There is an explicit connection between the shuffle and stuffle regularized multiple zeta values (see \cite{ikz}) given by
\begin{align} \label{comparison stuffle and shuffle regularized}
\sum_{w\in \Y^*} \zeta_{\ast}(w) w=\exp\left(\sum_{n\geq2} \frac{(-1)^{n-1}}{n}\zeta(n)y_1^n\right)\sum_{w\in \X^*} \zeta_{\shuffle}(w) \Pi_{\Y}(w).
\end{align}
Here the map $\Pi_{\Y}:\QX\to \QY$ is the canonical projection sending each word ending with $x_0$ to $0$ and $x_0^{k_1-1}x_1\cdots x_0^{k_d-1}x_1$ to $y_{k_1}\cdots y_{k_d}$. Comparing the stuffle product formula for the stuffle regularized multiple zeta values \eqref{stuffle mzv} and the shuffle product formula for the shuffle regularized multiple zeta values \eqref{shuffle mzv} by using \eqref{comparison stuffle and shuffle regularized} gives the \emph{extended double shuffle relations} among multiple zeta values.
\begin{con} \label{Conj all relations in Z} (\cite[Conjecture 1]{ikz}) All algebraic relations in the algebra $\Z$ of multiple zeta values are a consequence of the extended double shuffle relations.
\end{con}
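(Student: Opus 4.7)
This is the celebrated extended double shuffle conjecture of Ihara--Kaneko--Zagier, which remains open; any honest proposal must acknowledge that no complete proof is currently known, and I would not attempt to produce one. The natural strategic framework is to show that the surjective morphism $\Zf \twoheadrightarrow \Z$ recalled in the introduction is an isomorphism, which is equivalent to the identity $\dim_\QQ \Zf_k = \dim_\QQ \Z_k$ in every weight $k$.

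The standard line of attack splits into two independent halves. On the $\Z$-side, the work of Deligne and Brown on motivic multiple zeta values and mixed Tate motives over $\ZZ$ gives the upper bound $\dim_\QQ \Z_k \leq d_k$, where $d_k$ is defined by Zagier's recursion $d_k = d_{k-2} + d_{k-3}$ with $d_0 = 1$, $d_1 = 0$, $d_2 = 1$. On the $\Zf$-side, Racinet's framework recalled in the excerpt reduces the problem to understanding the affine group scheme $\DMo$ and its Lie algebra $\dmo$; one would need the opposite inequality $\dim_\QQ \Zf_k \leq d_k$, ideally by exhibiting a free generating system for $\dmo$ of the expected Broadhurst--Kreimer cardinality. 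Matching these two bounds would close the conjecture.

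Within the machinery developed in this paper, I would additionally try to exploit the projection $p:\formal \twoheadrightarrow \Zf$ together with the realization $\formal \twoheadrightarrow \nonformal$. Because balanced multiple $q$-zeta values satisfy $\tau$-invariance in addition to a balanced quasi-shuffle product, the algebra $\formal$ carries a genuinely different set of defining relations than $\Zf$ does via extended double shuffle alone; transporting relations through $p$ could either produce MZV relations not obviously generated by the extended double shuffle family, or, if every such relation is provably a consequence of the latter, provide a structural reformulation in which the desired dimension bound might become more tractable. The same circle of ideas interacts with Goncharov's coproduct on $\Zf/(\zeta^f(2))$ via the Hopf structure discussed in the introduction, which is an additional constraint a proof strategy should respect.

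The main obstacle, as for decades, is the upper bound $\dim_\QQ \Zf_k \leq d_k$: producing enough relations in $\Zf$ is a problem of algebraic-combinatorial nature that has resisted direct attack, and no motivic counterpart is available on the formal side since formal MZVs are defined by precisely the relations in question. Any concrete advance would have to come either from a genuinely new source of relations in $\Zf$ -- conceivably suggested by the $\formal$-framework developed here -- or from an unconditional structural description of $\dmo$ as a free Lie algebra on an explicit set of generators, and I would expect either route to require input substantially beyond the techniques of this paper.
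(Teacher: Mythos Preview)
Your assessment is correct: the statement is a \emph{conjecture} (attributed to Ihara--Kaneko--Zagier), and the paper does not prove it. It is stated precisely as motivation for introducing the algebra $\Zf$ of formal multiple zeta values and for expecting the surjection $\Zf\twoheadrightarrow\Z$ to be an isomorphism; no argument toward its resolution is offered in the paper, so there is no ``paper's own proof'' to compare against.

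Your surrounding discussion of the strategic landscape (the Deligne--Brown/Zagier upper bound on $\dim_\QQ\Z_k$, the missing lower bound via $\dmo$, and the potential use of the projection $p:\formal\twoheadrightarrow\Zf$) is accurate and appropriate context, though it goes well beyond what the paper itself says about the conjecture. One small caution: the paper does not claim that the $\formal$-framework yields new MZV relations beyond extended double shuffle; rather, the point made at the end of Section~\ref{BM and DM} is that the relations in $\formal$ \emph{recover} the extended double shuffle relations under $p$, offering a new viewpoint rather than new relations.
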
 \noindent
In particular, Conjecture \ref{Conj all relations in Z} would imply that the algebra $\Z$ is graded by weight, since the stuffle and shuffle product are both homogeneous in weight. Conjecture \ref{Conj all relations in Z} is a motivation to study the following.
\begin{defi} \label{algebra Z^f} 
The algebra of \emph{formal multiple zeta values} is 
\[\Zf=\faktor{(\QX,\shuffle)}{\operatorname{Rel}_{\operatorname{EDS}}},\]
where $\operatorname{Rel}_{\operatorname{EDS}}$ is the ideal in $(\QX,\shuffle)$ generated by the extended double shuffle relations.
\end{defi} \noindent 
We denote the class of an element $w\in \QX$ in the quotient $\Zf$ by $\zeta_{\shuffle}^f(w)$ and set $\zeta_{\shuffle}^f(\1)=1$. If $w\in \QX$ is a $\QQ$-linear combination of words starting with $x_0$ and ending with $x_1$, we abbreviate $\zeta^f(w)=\zeta_{\shuffle}^f(w)$. By construction, there is a realization of the algebra $\Zf$ into $\Z$ given by
\begin{align*}
\Zf&\longtwoheadrightarrow\Z, \\
\zeta_{\shuffle}^f(w)&\longmapsto\zeta_{\shuffle}(w).
\end{align*}
As a reformulation of Conjecture \ref{Conj all relations in Z}, this morphism is expected to be an isomorphism.
\vspace{0,2cm} \\
To associate an affine group scheme to the algebra $\Zf$ of formal multiple zeta values, we have to dualize the shuffle and stuffle product. Let $\dec$ be the deconcatenation coproduct, i.e., for each word $w$ in $\QX$ or $\QY$ we have
\begin{align*}
\dec(w)=\sum_{uv=w} u\otimes v.
\end{align*}
Then $(\QX,\shuffle,\dec)$ and $(\QY,\ast,\dec)$ are both weight-graded commutative Hopf algebras. Let $R$ be some commutative $\QQ$-algebra with unit and denote by $\RXc{R}$ resp. $\RYc{R}$ the free algebra of non-commutative power series in the alphabet $\X$ resp. $\Y$ with coefficients in $R$. The dual completed, cocommutative Hopf algebra to $(\QX,\shuffle,\dec)$ is given by $(\RXc{R},\conc,\co)$, where $\conc$ denotes the usual concatenation product and $\co$ is given on the generators by
\begin{align*}
\co(x_i)=x_i\otimes\1+\1\otimes x_i, \qquad i\in\{0,1\}.
\end{align*}
The duality pairing is
\begin{align*}
\RXc{R}\times \QX&\longrightarrow R, \\
(f,w)&\longmapsto (f\mid w),
\end{align*} 
where $(f\mid w)$ denotes the coefficient of $w$ in $f$. So, an element $f\in \RXc{R}$ is grouplike for $\co$, i.e., we have $\co(f)=f\otimes f$, if and only if $(f\mid u\shuffle v)=(f\mid u)(f\mid v)$ for all $u,v\in \QX$.\\ 
Similarly, the dual completed, cocommutative Hopf algebra to $(\QY,\ast,\dec)$ is given by $(\RYc{R},\conc,\stco)$, where the coproduct $\stco$ is defined on the generators by
\begin{align*}
\stco(y_i)=y_i\otimes\1+\1\otimes y_i+\sum_{j=1}^{i-1} y_j\otimes y_{i-j}, \qquad i\geq1.
\end{align*}
An element $f\in \RYc{R}$ is grouplike for $\stco$ if and only if $(f\mid u\ast v)=(f\mid u)(f\mid v)$ for all $u,v\in \QY$.
\begin{defi} \label{def DM} (\cite{rac}) For any commutative $\QQ$-algebra $R$ with unit, let $\DM(R)$ be the set of all non-commutative power series $\phi\in \RXc{R}$ satisfying
\renewcommand{\arraystretch}{1,1}
\begin{center} \centering \begin{tabular}{cclc}
(i) & $(\phi|x_0)\ =\ (\phi|x_1)$ & $=$ & $0$, \\ 
(ii) & $\co(\phi)$ & $=$ & $\phi \otimes \phi$, \\
(iii) & $\stco(\phi_\ast)$ & $=$ & $\phi_\ast \otimes \phi_\ast$, 
\end{tabular} \end{center} \renewcommand{\arraystretch}{1}
where 
\[\phi_\ast=\exp\left(\sum\limits_{n\geq 2}\frac{(-1)^{n-1}}{n}(\Pi_\Y(\phi)|y_n)y_1^n\right)\Pi_\Y(\phi)\in \RYc{R},\] 
and $\Pi_\Y$ is the $R$-linear, completed extension of the canonical projection $\QX\to\QY$ sending each word ending with $x_0$ to $0$ and $x_0^{k_1-1}x_1\cdots x_0^{k_d-1}x_1$ to $y_{k_1}\cdots y_{k_d}$.
\vspace{0,2cm} \\
For each $\lambda\in R$, denote by $\DM_\lambda(R)$ the set of all $\phi\in \DM(R)$, which additionally satisfy
\[\hspace{-2,7cm}\text{(iv)} \qquad (\phi|x_0x_1) = \lambda. \]
\end{defi} \noindent
For example, we have for the non-commutative generating series of the shuffle regularized multiple zeta values
\[\sum_{w\in \X^*} \zeta_{\shuffle}(w)w\in \DM_{\pi^2/6}(\Z).\]
By \cite[Theorem I]{rac2}, the set $\DM_\lambda(R)$ is always non-empty.
\vspace{0,2cm} \\
In the following, we let $\QQ\operatorname{-Alg}$ be the category of commutative $\QQ$-algebras with unit and denote by $\operatorname{Sets}$ the category of all sets.
\begin{thm} \label{DM affine scheme} 
(i) The functor $\DM:\QQ\operatorname{-Alg}\to \operatorname{Sets}$ is an affine scheme represented by the algebra $\Zf$ of formal multiple zeta values. \\
(ii) The functor $\DM_\lambda:\QQ\operatorname{-Alg}\to \operatorname{Sets}$ is an affine scheme represented by the quotient algebra \[\faktor{\Zf\!}{\!(\zeta^f(2)-\lambda)}.\]
\end{thm}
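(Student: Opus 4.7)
The plan is to use the duality pairing $\RXc{R}\times\QX\to R$, $(\phi,w)\mapsto(\phi|w)$, to convert each power series in $\DM(R)$ into an algebra morphism $\Zf\to R$ and vice versa. For $\phi\in\DM(R)$, I would define the $R$-linear map $f_\phi:\QX\to R$ by $w\mapsto(\phi|w)$. Condition (ii) $\co(\phi)=\phi\otimes\phi$ is exactly the statement that $\phi$ is grouplike for $\co$, which by the duality between $(\QX,\shuffle,\dec)$ and $(\RXc{R},\conc,\co)$ translates into $(\phi\mid u\shuffle v)=(\phi\mid u)(\phi\mid v)$ for all $u,v\in\QX$. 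Hence $f_\phi$ is an algebra morphism $(\QX,\shuffle)\to R$. Condition (i) ensures $f_\phi(x_0)=f_\phi(x_1)=0$, matching the shuffle-regularization normalizations built into $\Zf$.

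The key step is to show $f_\phi$ vanishes on the ideal $\operatorname{Rel}_{\operatorname{EDS}}$. By construction of $\Zf$, this ideal is generated by the elements that encode the stuffle product on the stuffle-regularized values, compared to the shuffle side via formula \eqref{comparison stuffle and shuffle regularized}. The exponential correction in the definition of $\phi_\ast$ is precisely the dual of that comparison formula: starting from any $\phi$ grouplike for $\co$, applying $\Pi_{\Y}$ and conjugating by $\exp(\sum_{n\geq2}\tfrac{(-1)^{n-1}}{n}(\Pi_{\Y}(\phi)|y_n)y_1^n)$ produces a series $\phi_\ast\in\RYc{R}$ whose coefficients pair with $\QY$ to give the stuffle-regularized values associated to $\phi$. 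Condition (iii) $\stco(\phi_\ast)=\phi_\ast\otimes\phi_\ast$ then asserts that these values satisfy the stuffle product, which is exactly the content of the EDS relations beyond the shuffle ones. Therefore $f_\phi$ descends to an algebra morphism $\Zf\to R$.

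Conversely, given an algebra morphism $g:\Zf\to R$, I would set $\phi_g=\sum_{w\in\X^*}g(\zeta_{\shuffle}^f(w))w\in\RXc{R}$. The defining properties of $\zeta_{\shuffle}^f$ in $\Zf$ translate, via the same duality, into the three conditions defining $\DM(R)$: the vanishing of $\zeta_{\shuffle}^f(x_0)$ and $\zeta_{\shuffle}^f(x_1)$ yields (i); multiplicativity of $g$ with respect to $\shuffle$ yields (ii); and the EDS relations on the stuffle side yield (iii) after translating through \eqref{comparison stuffle and shuffle regularized}. Standard arguments show the assignments $\phi\mapsto f_\phi$ and $g\mapsto\phi_g$ are mutually inverse, and naturality in $R$ is immediate from the functoriality of the pairing and of the operations $\co$, $\stco$, $\Pi_{\Y}$.

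The main obstacle is to verify cleanly that condition (iii), after unwinding the exponential correction, captures \emph{exactly} the stuffle-type generators of $\operatorname{Rel}_{\operatorname{EDS}}$ — no more and no less — so that the correspondence between $\DM(R)$ and $\Hom(\Zf,R)$ is bijective. This amounts to checking that the nonlinear change of variables $\phi\mapsto\phi_\ast$ is a bijection between grouplike elements for $\co$ (with vanishing $x_0,x_1$ coefficients) and series in $\RYc{R}$, and then invoking the universal property of the quasi-shuffle algebra $(\QY,\ast)$ to identify grouplike conditions with multiplicativity. Finally for part (ii), condition (iv) $(\phi\mid x_0x_1)=\lambda$ reads $g(\zeta^f(2))=\lambda$ through the bijection, i.e.\ the corresponding morphism factors through $\Zf/(\zeta^f(2)-\lambda)$, giving the claimed representability of $\DM_\lambda$.
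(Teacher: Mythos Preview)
The paper does not actually prove this theorem; it is stated in Section~\ref{MZV} as a result recalled from Racinet's thesis, with only the explicit bijections \eqref{bijections DM} written out afterwards. So there is no proof in the paper to compare against directly.

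That said, your approach is correct and is precisely the same strategy the paper uses to prove the analogous new result, Theorem~\ref{BM affine scheme}, for the scheme $\BM$ and the algebra $\formal$: start from an algebra morphism $\varphi$, form the generating series $\sum_w \varphi(\text{class of }w)\,w$, and use the duality pairing to translate the grouplike condition into multiplicativity and the remaining defining conditions into the defining relations of the quotient; then observe that the inverse assignment $\Phi\mapsto\big(w\mapsto(\Phi\mid w)\big)$ works for the same reasons read backwards. Your identification of the ``main obstacle'' is also the right one: the exponential correction in $\phi_\ast$ is exactly what is needed so that condition~(iii) matches the stuffle half of $\operatorname{Rel}_{\operatorname{EDS}}$ via the comparison formula \eqref{comparison stuffle and shuffle regularized}. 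One small imprecision: the map $\phi\mapsto\phi_\ast$ is not a bijection onto $\RYc{R}$ in any useful sense; what you actually need is only that multiplying $\Pi_\Y(\phi)$ by the grouplike correction factor $\exp(\cdots)$ preserves and reflects grouplikeness for $\stco$, so that condition~(iii) on $\phi_\ast$ is equivalent to the stuffle-multiplicativity of the coefficients of $\Pi_\Y(\phi)$ after the IKZ regularization comparison.
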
 \noindent
More precisely, for each $R\in\QQ\operatorname{-Alg}$ there are bijections 
\begin{align} \label{bijections DM}
\hspace{-0,4cm}\Hom(\Zf,R)\overset{\sim}{\longrightarrow}\DM(R), \quad \Hom\Big(\faktor{\Zf\!}{\!(\zeta^f(2)-\lambda)}, R\Big)\overset{\sim}{\longrightarrow}\DM_\lambda(R).
\end{align}
In the case $\lambda=0$, the functor $\DMo$ is an affine group scheme (\cite[Section IV]{rac}). The group multiplication on $\DMo(R)$ is given by 
\begin{align*} 
G\circledast H=G\kappa_G(H), \qquad \quad \text{for} \quad G,H\in \DMo(R),
\end{align*}
where $\kappa_{G}$ is the algebra automorphism on $\RXc{R}$ defined by $\kappa_G(\1)=\1$, $\kappa_G(x_0)=x_0$ and $\kappa_G(x_1)=G^{-1}x_1G$. To prove this Racinet considered the linearized space $\dmo$ and showed that it is Lie algebra with the Ihara bracket, which is exactly the linearized operation to $\circledast$. After showing $\exp_{\circledast}(\dmo)=\DMo$, this implies that $\DMo$ is an affine group scheme. A consequence of this story is the following theorem first stated by J. Ecalle.
\begin{thm} \label{Z^f polynomial algebra} (\cite[Section IV, Corollary 3.14]{rac}) There is an algebra isomorphism
\[ \Zf\simeq \QQ[\zeta^f(2)]\otimes_\QQ\mathcal{U}(\dmo)^\vee.\]
So, $\Zf$ is a free polynomial algebra.
\end{thm}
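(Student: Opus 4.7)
The plan is to derive the isomorphism from the affine group scheme structure on $\DMo$ together with the weight grading on $\Zf$, in three stages.

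\textbf{First}, using Theorem \ref{DM affine scheme}(ii) with $\lambda=0$, identify the coordinate ring of $\DMo$ with $\Zf/(\zeta^f(2))$. Because $\DMo$ is an affine group scheme for Racinet's product $\circledast$, this quotient inherits a commutative Hopf algebra structure over $\QQ$, and the weight grading on $\QX$ descends (all defining relations being weight-homogeneous), making it a connected graded commutative Hopf algebra.

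\textbf{Second}, invoke the Cartier--Milnor--Moore theorem in its graded dual form: a connected graded commutative Hopf algebra over a field of characteristic zero is the graded dual of a connected graded cocommutative Hopf algebra, which by Milnor--Moore equals $\mathcal{U}(\mathfrak{g})$ for its Lie algebra of primitives $\mathfrak{g}$. Racinet's analysis identifies this $\mathfrak{g}$ with $\dmo$ equipped with the Ihara bracket (which he proves to be the linearization of $\circledast$), and his result $\exp_\circledast(\dmo)=\DMo$ ensures that $\dmo$ captures the full group. Hence
\[\faktor{\Zf\!}{\!(\zeta^f(2))} \simeq \mathcal{U}(\dmo)^\vee.\]
The right-hand side is a polynomial algebra: by PBW applied to a weight-homogeneous basis of $\dmo$, the graded dual of $\mathcal{U}(\dmo)$ is freely generated as a commutative algebra by the duals of such a basis.

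\textbf{Third}, lift the isomorphism to all of $\Zf$ by splitting off a polynomial factor in $\zeta^f(2)$. Using the weight grading, $\zeta^f(2)$ is a nonzero weight-2 element, not a zero divisor, and algebraically independent of any homogeneous lift of a polynomial basis of $\Zf/(\zeta^f(2))$. Choosing a weight-graded section of the projection $\Zf\twoheadrightarrow \Zf/(\zeta^f(2))$ (obtained by lifting such a basis term by term in increasing weight) gives an algebra isomorphism
\[\Zf \simeq \QQ[\zeta^f(2)] \otimes_\QQ \faktor{\Zf\!}{\!(\zeta^f(2))},\]
and combining with the previous step yields the stated isomorphism, together with the polynomial freeness of $\Zf$.

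\textbf{Main obstacle.} The delicate step is the second one: making the duality $\mathcal{O}(\DMo) \simeq \mathcal{U}(\dmo)^\vee$ rigorous. This requires verifying that $\DMo$ is genuinely pro-unipotent (ensured by $(\phi\mid x_0)=(\phi\mid x_1)=0$ together with the weight grading), that $\exp_\circledast:\dmo\to\DMo$ is a bijective morphism of schemes, and that the Ihara bracket on $\dmo$ matches the Lie structure dual to the comultiplication on $\Zf/(\zeta^f(2))$. Once these compatibilities are in place, the polynomial-algebra statement and the splitting off of $\QQ[\zeta^f(2)]$ are formal consequences of the weight grading and the standard structure theorems for graded connected Hopf algebras.
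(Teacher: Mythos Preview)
The paper does not give its own proof of this theorem; it is quoted from Racinet's thesis with the citation \cite[Section IV, Corollary 3.14]{rac}, and the only argument the paper supplies is the informal sketch in the paragraph preceding the statement (linearize $\DMo$ to get $\dmo$, equip it with the Ihara bracket, show $\exp_{\circledast}(\dmo)=\DMo$, and deduce the corollary). Your three-stage outline is exactly a fleshed-out version of that sketch, so there is nothing to compare beyond noting that your proposal agrees with the route the paper indicates.
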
 \noindent
By dualizing the product $\circledast$ on $\DMo$ one obtains a coproduct, which is usually referred to as Goncharov's coproduct (\cite{gon}). A second structural consequence for $\Zf$ is obtained from combining \cite[Section IV]{rac} and the calculations in \cite[Proposition 3.418]{bgf}.
\begin{thm} The algebra $\faktor{\Zf\!}{\!(\zeta^f(2))}$ equipped with Goncharov's coproduct is a weight-graded Hopf algebra.
\end{thm}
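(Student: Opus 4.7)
The plan is to deduce the theorem as a formal consequence of two ingredients that have already appeared in the exposition: Racinet's theorem that $\DMo$ is an affine group scheme under $\circledast$ (\cite[Section IV]{rac}), and the identification of the resulting dualized operation with Goncharov's coproduct carried out in \cite[Proposition 3.418]{bgf}. The weight-grading will be automatic from the fact that both $\circledast$ and the defining relations of $\Zf$ are homogeneous with respect to word length in $\X$.

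First, by Theorem~\ref{DM affine scheme}(ii) specialized at $\lambda=0$, the algebra $\faktor{\Zf\!}{\!(\zeta^f(2))}$ represents the affine scheme $\DMo$. Since $\DMo$ is an affine group scheme under $\circledast$, the antiequivalence between affine group schemes over $\QQ$ and commutative Hopf algebras over $\QQ$ endows its coordinate algebra with a commutative Hopf algebra structure. Concretely, applying Yoneda's lemma to the natural transformation $\circledast:\DMo\times\DMo\to \DMo$ yields an algebra morphism
\[\Delta:\faktor{\Zf\!}{\!(\zeta^f(2))}\longrightarrow \faktor{\Zf\!}{\!(\zeta^f(2))}\otimes_\QQ \faktor{\Zf\!}{\!(\zeta^f(2))},\]
while the unit element $\1\in\DMo(R)$ and the group inversion supply a counit and an antipode. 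Coassociativity, the counit axiom, and the antipode identity follow directly from the corresponding group axioms of $(\DMo(R),\circledast)$.

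The second step is to verify that $\Delta$ coincides with Goncharov's coproduct. For this I would unwind $G\circledast H=G\,\kappa_G(H)$ at the level of pairings: for each word $w\in\X^*$, the coefficient $(G\kappa_G(H)\mid w)$ expands as a sum of products $(G\mid u)(H\mid v)$ indexed by the combinatorics of substituting $x_1\mapsto G^{-1}x_1G$ into $w$. This expansion matches the admissible-cuts description of Goncharov's coproduct, which is precisely the computation recorded in \cite[Proposition 3.418]{bgf}.

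For the weight-grading, the point is that $\kappa_G$ respects the total-length grading on $\RXc{R}$ (both $x_0$ and $x_1$ carry weight one), so in the pairing only decompositions $u,v$ with $\wt(u)+\wt(v)=\wt(w)$ contribute. Combined with the fact that $\operatorname{Rel}_{\operatorname{EDS}}$ and $(\zeta^f(2))$ are weight-homogeneous ideals, this shows that $\faktor{\Zf\!}{\!(\zeta^f(2))}$ is weight-graded and that $\Delta$ is homogeneous of degree $0$ for this grading. The main obstacle is the explicit identification of the dualized $\circledast$ with Goncharov's combinatorial formula; everything else is a direct application of the group scheme / commutative Hopf algebra dictionary.
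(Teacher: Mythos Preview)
Your proposal is correct and follows exactly the approach indicated in the paper: the theorem is stated there as a direct consequence of combining Racinet's result that $\DMo$ is an affine group scheme under $\circledast$ (\cite[Section IV]{rac}) with the identification of the dualized product as Goncharov's coproduct (\cite[Proposition 3.418]{bgf}), and no further argument is given. Your elaboration via the Yoneda dictionary and the weight-homogeneity check is the natural unpacking of this citation.
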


\section{The algebra $\nonformal$} \label{nonformal}

\noindent
We present the algebra $\nonformal$, which combines $q$-analogs of multiple zeta values, (combinatorial) multiple Eisenstein series, and also generating series of certain functions on partitions. We illustrate the strong connections to the algebra $\Z$ of multiple zeta values.

\subsection{$q$-analogs of multiple zeta values.}
A $q$-analog of some expression is a generalization involving the variable $q$, which returns the original expression for the limit $q\to 1$. For example, a $q$-analog of some positive integer $n$ is given by
\begin{align*}
[n]_q=\frac{1-q^n}{1-q}=1+q+q^2+\dots+q^{n-1}.
\end{align*}
We are interested in $q$-analogs of multiple zeta values.
\begin{defi} (\cite{schl}, \cite{zu}, \cite{ems}) For integers $s_1\geq1,\ s_2,\ldots,s_l\geq0$, the \emph{Schlesinger-Zudilin multiple $q$-zeta value} is
\[\SZ(s_1,\ldots,s_l)=\sum_{n_1>\dots>n_l>0} \frac{q^{n_1s_1}}{(1-q^{n_1})^{s_1}}\cdots \frac{q^{n_ls_l}}{(1-q^{n_l})^{s_l}}\in \QQ\llbracket q\rrbracket.\]
The number $s_1+\dots+s_l+\#\{i\mid s_i=0\}$ is called its \emph{weight} and the number $l-\#\{i\mid s_i=0\}$ its \emph{depth}.
\end{defi} \noindent
For integers $s_1\geq2, s_2,\ldots,s_l\geq1$, one easily verifies
\begin{align*}
\lim_{q\to 1} (1-q)^{s_1+\dots+s_l}\SZ(s_1,\ldots,s_l)=\lim_{q\to1} \sum_{n_1>\dots>n_l>0} \frac{q^{n_1s_1}}{[n_1]_q^{s_1}}\cdots \frac{q^{n_ls_l}}{[n_l]_q^{s_l}}=\zeta(s_1,\ldots,s_l).
\end{align*}
So the Schlesinger-Zudilin multiple $q$-zeta values are (sometimes called modified) $q$-analogs of multiple zeta values. More generally, in \cite{bi} it is shown that the regularized limit of all Schlesinger-Zudilin multiple $q$-zeta values lies in $\Z$.
\begin{defi} We denote
\[\nonformal=\spanQ\{\SZ(s_1,\ldots,s_l)\mid l\geq0,\ s_1\geq1,\ s_2,\ldots,s_l\geq0\},\]
where $\SZ(\emptyset)=1$.
\end{defi} \noindent
The space $\nonformal$ is an algebra. To describe the product of the Schlesinger-Zudilin multiple $q$-zeta values, one usually uses quasi-shuffle products. 
\\
Consider the alphabet $\B=\{b_0,b_1,b_2,\ldots\}$. Let $\B^*$ the set of all words with letters in $\B$, and denote by $\QB$ the free non-commutative polynomial algebra over $\QQ$ generated by $\B$. We define the \emph{weight} and \emph{depth} of a word in $\QB$ as
\begin{align*}
\wt(b_{s_1}\cdots b_{s_l})=s_1+\dots+s_l+\#\{i\mid s_i=0\},\qquad \dep(b_{s_1}\cdots b_{s_l})=l-\#\{i\mid s_i=0\}.
\end{align*}
The SZ-stuffle product on $\QB$ is recursively defined by $\1\ast_{\operatorname{SZ}} w=w\ast_{\operatorname{SZ}} \1=w$ and 
\begin{align*}
b_iu\ast_{\operatorname{SZ}}b_jv=b_i(u\ast_{\operatorname{SZ}}b_jv)+b_j(b_iu\ast_{\operatorname{SZ}}v)+b_{i+j}(u\ast_{\operatorname{SZ}}v)
\end{align*}
for all $u,v,w\in \QB$. It is filtered with respect to the weight and the depth, but not graded. In the SZ-stuffle product always occur terms of lower depth, and if one of the factors contains the letter $b_0$ then in the SZ-stuffle product also words of lower weight appear. Let
\begin{align*}
\QB^0=\QB\backslash b_0\QB
\end{align*}
be the subspace spanned by all words not starting in $b_0$. As explained in \cite[Proposition 3.3]{si}, the combinatorics of infinite nested sums imply that there is a surjective algebra morphism
\begin{align} \label{SZ stuffle}
(\QB^0,\ast_{\operatorname{SZ}})&\longrightarrow \nonformal, \\
b_{s_1}\cdots b_{s_l}&\longmapsto \SZ(s_1,\ldots,s_l).
\end{align}
For simplicity, we will also write $\SZ(b_{s_1}\cdots b_{s_l})$ for $\SZ(s_1,\ldots,s_l)$.\\
\\
A second set of relations between Schlesinger-Zudilin multiple $q$-zeta values comes from an involution on $\QB^0$.
\begin{defi}
The involution $\tau:\QB^0\to\QB^0$ is the $\QQ$-linear map defined by
\[\tau(b_{k_1}b_0^{m_1}\cdots b_{k_d}b_0^{m_d})=b_{m_d+1}b_0^{k_d-1}\cdots b_{m_1+1}b_0^{k_1-1}\]
for all $k_1,\ldots,k_d\geq1,\ m_1,\ldots,m_d\geq0$.
\end{defi} \noindent
The Schlesinger-Zudilin multiple $q$-zeta values are invariant under this involution (\cite[Theorem 4]{ta}),
\begin{align} \label{SZ tau invariance}
\SZ(\tau(w))=\SZ(w), \qquad w\in \QB^0.
\end{align} 
These relations are homogeneous in weight. To prove the equality in \eqref{SZ tau invariance}, we first observe that for each $k\geq1$
\begin{align*}
\frac{X^k}{(1-X)^k}=\sum_{r>0} \binom{r-1}{k-1} X^r,
\end{align*}
and that for each $m\geq0$
\begin{align*}
\sum_{N_1>n_1>\dots>n_m>N_2} 1=\binom{N_1-N_2-1}{m}, \qquad N_1,N_2\geq1.
\end{align*}
Thus, we obtain for $k_1,\ldots,k_d\geq1,\ m_1,\ldots,m_d\geq0$
\begin{align} \label{SZ binomial coefficients}
\SZ(k_1,\{0\}^{m_1},\ldots,k_d,\{0\}^{m_d})&=\sum_{\substack{N_1>\dots> N_d>N_{d+1}=0 \\ N_i>n_1^{(i)}>\dots >n_{m_i}^{(i)}>N_{i+1} \\ \text{for i=1,\ldots,d}}} \frac{q^{N_1k_1}}{(1-q^{N_1})^{k_1}}\cdots \frac{q^{N_dk_d}}{(1-q^{N_d})^{k_d}} \\
&=\sum_{\substack{N_1>\dots>N_d>N_{d+1}=0,\\ r_1,\ldots,r_d>0}} \prod_{i=1}^{d}\binom{N_i-N_{i+1}-1}{m_i}\binom{r_i-1}{k_i-1}  q^{N_ir_i}. 
\end{align}
By substituting the variables $N_i-N_{i+1}=r_{d+1-i}$, we obtain that this sum is equal to
\begin{align} \label{SZ variable substitution}
&=\sum_{\substack{N_1>\dots>N_d>N_{d+1}=0,\\ r_1,\ldots,r_d>0}} \prod_{i=1}^{d}\binom{r_{d+1-i}-1}{m_i}\binom{N_{d+1-i}-N_{d+2-i}-1}{k_i-1}  q^{N_ir_i} \\
&=\SZ(m_d+1,\{0\}^{k_d-1},\ldots,m_1+1,\{0\}^{k_1-1}).
\end{align}
Let $\lambda=((u_1,\ldots,u_d),(v_1,\ldots,v_d))$ be a partition of some positive integer $N$. This means that the ordered numbers $u_1>\dots>u_d>0$ are the different parts of the partition $\lambda$ and the numbers $v_1,\ldots,v_d>0$ are their multiplicities,
\[N=u_1v_1+\dots+u_dv_d.\] 
We refer to the number $d$ as the length of the partition $\lambda$. Then the calculation in \eqref{SZ binomial coefficients} shows that the coefficient of $q^N$ in any Schlesinger-Zudilin multiple $q$-zeta value is a sum over all partitions of $N$ of length $d$. In this picture, the variable substitution in \eqref{SZ variable substitution} corresponds to the conjugation of partitions.
\[\begin{tikzpicture}[scale=0.6]
\draw (0,2.5) -- (0,0) -- (1,0) -- (1,1.5) -- (2,1.5) -- (2,2.5);
\draw [densely dotted] (0,2.5) -- (0,3.5);
\draw [densely dotted] (2,2.5) -- (3,2.5) -- (3,3.5);
\draw (3,3.5) -- (4,3.5) -- (4,4.5) -- (6.2,4.5) -- (6.2,5.5) -- (0,5.5) -- (0,3.5);
\draw (0,5.5) -- (6.2,5.5); 
\draw (0,5.5) -- (0,3.5); 
\draw [densely dotted] (0,2.5) -- (0,3.5);
\draw (0,0) -- (0,2.5);
\draw[decoration={brace,raise=2pt},decorate] (0,5.5) -- node[above=2pt] {\tiny $N_1$} (6.2,5.5);
\draw[decoration={brace,raise=2pt},decorate] (0,4.5) -- node[above=2pt] {\tiny $N_2$} (4,4.5);
\draw[decoration={brace,raise=2pt},decorate] (0,2.5) -- node[above=2pt] {\tiny $N_{d-1}$} (2,2.5);
\draw[decoration={brace,raise=2pt},decorate] (0,1.5) -- node[above=2pt] {\tiny $N_d$} (1,1.5);
\draw[decoration={brace,raise=2pt},decorate] (6.2,5.5) -- node[right=2pt] {\tiny $r_1$} (6.2,4.5);
\draw[decoration={brace,raise=2pt},decorate] (4,4.5) -- node[right=2pt] {\tiny $r_2$} (4,3.5);
\draw[decoration={brace,raise=2pt},decorate] (2,2.5) -- node[right=2pt] {\tiny $r_{d-1}$} (2,1.5);
\draw[decoration={brace,raise=2pt},decorate] (1,1.5) -- node[right=2pt] {\tiny $r_d$} (1,0);
	
\draw [->] (7,3) -- node[above=2pt]{\footnotesize{conjugation}} (11,3);
	
\begin{scope}[shift={(12,0.3)}]	
\draw (0,2.5) -- (0,-0.6) -- (1,-0.6) -- (1,1.5) -- (2,1.5) -- (2,2.5);
\draw [densely dotted] (0,2.5) -- (0,3.5);
\draw [densely dotted] (2,2.5) -- (3,2.5) -- (3,3.5);
\draw (3,3.5) -- (4,3.5) -- (4,4.5) -- (5.5,4.5) -- (5.5,5.5) -- (0,5.5) -- (0,3.5);		
\draw (0,5.5) -- (5.5,5.5); 
\draw (0,5.5) -- (0,3.5); 
\draw [densely dotted] (0,2.5) -- (0,3.5);
\draw (0,-0.6) -- (0,2.5);		
\draw[decoration={brace,raise=2pt},decorate] (0,5.5) -- node[above=2pt] {\tiny $r_1 + \dots + r_d$} (5.5,5.5);
\draw[decoration={brace,raise=2pt},decorate] (0,4.5) -- node[above=2pt] {\tiny $r_1 + \dots + r_{d-1}$} (4,4.5);
\draw[decoration={brace,raise=2pt},decorate] (0,2.5) -- node[above=2pt] {\tiny $r_1+r_2$} (2,2.5);
\draw[decoration={brace,raise=2pt},decorate] (0,1.5) -- node[above=2pt] {\tiny $r_1$} (1,1.5);
\draw[decoration={brace,raise=2pt},decorate] (5.5,5.5) -- node[right=2pt] {\tiny $N_d$} (5.5,4.5);
\draw[decoration={brace,raise=2pt},decorate] (4,4.5) -- node[right=2pt] {\tiny $N_{d-1}-N_d$} (4,3.5);
\draw[decoration={brace,raise=2pt},decorate] (2,2.5) -- node[right=2pt] {\tiny $N_2-N_3$} (2,1.5);
\draw[decoration={brace,raise=2pt},decorate] (1,1.5) -- node[right=2pt] {\tiny $N_1-N_2$} (1,-0.6);
\end{scope}
\end{tikzpicture} \]
\begin{rem} 
Any polynomial $f\in \QQ[X_1,\ldots,X_d,Y_1,\ldots,Y_d]$ can be evaluated at the partition $\lambda=((u_1,\ldots,u_d),(v_1,\ldots,v_d))$ as
\[f(\lambda)=f(u_1,\ldots,u_d,v_1,\ldots,v_d).\]
Denote by $\mathcal{P}_d(N)$ the set of all partitions of $N$ of length $d$. Then we associate to a polynomial $f\in \QQ[X_1,\ldots,X_d,Y_1,\ldots,Y_d]$ the family of generating series 
\[\operatorname{Gen}_f^{(d)}(q)=\sum_{N\geq1}\left(\sum_{\lambda\in \mathcal{P}_d(N)} f(\lambda)\right)q^N, \qquad d\geq0.\]
The only partition of length $0$ is the empty partition, so $\operatorname{Gen}_f^{(0)}(q)=1$. 
%If we choose 
%\begin{align*}
%f(X_1,\ldots,X_d,Y_1,\ldots,Y_d)=\prod_{i=1}^d \binom{Y_{d+1-i}-1}{m_i}\binom{X_{d++1-i}-X_{d+2-i}-1}{k_i-1}
%\end{align*}
%for some $d\geq1,\ k_1,\ldots,k_d\geq1,\ m_1,\ldots,m_d\geq0$ and $X_{d+1}:=0$, then by the computations in \eqref{SZ binomial coefficients} we have
%\[\operatorname{Gen}_f^{(d)}(q)=\SZ(k_1,\{0\}^{m_1},\ldots,k_d,\{0\}^{m_d}).\]
Since the polynomial ring $\QQ[X]$ is spanned by the binomial coefficients $\binom{X}{k}=\prod_{j=1}^k \frac{X+1-j}{j}$, $k\geq0$, the computations in \eqref{SZ binomial coefficients} imply
\[\nonformal=\spanQ\{\operatorname{Gen}_f^{(d)}(q)\mid d\geq0,\ f\in\QQ[X_1,\ldots,X_d,Y_1,\ldots,Y_d]\}. \]
More generally, in \cite[cf (1.6)]{bi} it is shown that the space $\nonformal$ is exactly the image of the polynomial functions on partitions under the $q$-bracket.
\end{rem} 
\begin{rem}
The algebra $\nonformal$ unifies many models for $q$-analogs of multiple zeta values occurring in the literature. In \cite{bk}, the authors consider for integers $s_1\geq1,\ s_2,\ldots,s_l\geq0$, and polynomials $R_1,\ldots,R_l\in \QQ[t]$, with $R_1(0)=0$, $q$-analogues of the form
\begin{align} \label{generic qMZV} \zq(s_1,\ldots,s_l;R_1,\ldots,R_l)=\sum_{n_1>\dots >n_l>0} \frac{R_1(q^{n_1})}{(1-q^{n_1})^{s_1}}\cdots \frac{R_l(q^{n_l})}{(1-q^{n_l})^{s_l}}\in \QQ\llbracket q\rrbracket.
\end{align}
One verifies that
\[\nonformal=\spanQ\{\zq(s_1,\ldots,s_l;R_1,\ldots,R_l)\mid l\geq0, s_1\geq1,s_2,\ldots,s_l\geq0,\deg(R_j)\leq s_j\}.\]
Choosing $R_i=t^{s_i}$ for $i=1,\ldots,l$, one recovers the Schlesinger-Zudilin multiple $q$-zeta values. The choice $R_i=t^{s_i-1}$ for $i=1,\ldots,l$ yields the Bradley-Zhao multiple $q$-zeta values (\cite{bra}, \cite{zh2})
\begin{align*} 
\zq^{\operatorname{BZ}}(s_1,\ldots,s_l)=\sum_{n_1>\dots> n_l>0} \frac{q^{n_1(s_1-1)}}{(1-q^{n_1})^{s_1}}\cdots \frac{q^{n_l(s_l-1)}}{(1-q^{n_l})^{s_l}} \qquad (s_1\geq2,s_2,\ldots,s_l\geq1).
\end{align*}
There are several more models for multiple $q$-zeta values proposed by Bachmann (\cite{ba}), Ohno-Okuda-Zudilin (\cite{ooz}), and Okounkov (\cite{ok}), which also possess an expression in terms of the generic multiple $q$-zeta values in \eqref{generic qMZV}. A detailed overview is given in \cite{bri}. 
\end{rem}

\subsection{Balanced multiple $q$-zeta values.} In analogy to the case of multiple zeta values and in order to apply similar techniques as Racinet in his thesis, we are interested in a spanning set of $\nonformal$ satisfying weight-homogeneous relations. In \cite{bu2} a spanning set is given, which satisfies conjecturally exactly the relations between Schlesinger-Zudilin multiple $q$-zeta values modulo lower weight. We explain these in the following.
\begin{defi} \label{bst algebra} We define the balanced quasi-shuffle product $\bst$ on $\QB$ recursively by $\1\bst w=w\bst \1=w$ and
\[b_iu\bst b_jv=b_i(u\bst b_jv)+b_j(b_iu\bst v)+\begin{cases} b_{i+j}(u\bst v), \quad & i,j \geq1, \\ 0 & \text{else}. \end{cases} \]
\end{defi} \noindent
The algebra $(\QB,\bst)$ is exactly the associated graded algebra to $(\QB,\ast_{\operatorname{SZ}})$ with respect to the weight
\[\wt(b_{s_1}\cdots b_{s_l})=s_1+\dots+s_l+\#\{i\mid s_i=0\},\qquad s_1,\ldots,s_l\geq0.\]
Moreover, the balanced quasi-shuffle product is a natural combination of the stuffle product \eqref{stuffle mzv} and the shuffle product \eqref{shuffle mzv} of multiple zeta values.
\begin{thm} \label{thm balanced} (\cite[Theorem 10.4]{bu2} There is a surjective, $\tau$-invariant algebra morphism
\begin{align*}
(\QB^0,\bst)&\longrightarrow \nonformal, \\
b_{s_1}\cdots b_{s_l}&\longmapsto\zq(s_1,\ldots,s_l).
\end{align*}
\end{thm}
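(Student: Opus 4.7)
The plan is to construct elements $\zq(s_1,\ldots,s_l) \in \nonformal$ that satisfy the balanced quasi-shuffle product exactly (not merely modulo lower weight) and that remain invariant under $\tau$, and then to show they still span $\nonformal$. The starting observation is that by Definition~\ref{bst algebra} the pair $(\QB^0,\bst)$ is the associated graded of $(\QB^0,\ast_{\operatorname{SZ}})$ with respect to the weight filtration. Hence the Schlesinger--Zudilin values $\SZ(s_1,\ldots,s_l)$ already satisfy the $\bst$-product \emph{modulo lower weight} via the morphism \eqref{SZ stuffle}, and the central task is to add lower-weight corrections so that equality holds on the nose, while keeping the $\tau$-symmetry \eqref{SZ tau invariance}.

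First I would define $\zq(w)$ inductively on weight by $\zq(w) = \SZ(w) + (\text{lower-weight corrections})$, chosen so that the discrepancy $\SZ(u \ast_{\operatorname{SZ}} v) - \SZ(u \bst v)$ at lower weight is absorbed. The cleanest way to make these corrections canonical is via the bi-bracket picture sketched in the introduction: bi-brackets already span $\nonformal$ and carry both the stuffle and shuffle structures naturally, and combining them with a fixed rational solution of the extended double shuffle equations (as in the combinatorial bi-multiple Eisenstein series construction) yields candidate $\zq(s_1,\ldots,s_l)$. Surjectivity then follows formally because the map $\SZ(w) \mapsto \zq(w)$ is upper-triangular with respect to the weight filtration on $\QB^0$ and hence invertible in each weight, so the $\zq$-values span the same space as the $\SZ$-values, which is $\nonformal$. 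The algebra morphism identity $\zq(u \bst v) = \zq(u)\zq(v)$ then reduces, by construction, to the top-weight part of the SZ-stuffle identity, with the chosen corrections eliminating the lower-weight errors. For $\tau$-invariance, translate $\tau$ through Stanley coordinates into conjugation of partitions (as in the diagram after \eqref{SZ variable substitution}); bi-brackets are manifestly invariant under this conjugation, and so is the symmetric correction prescription.

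The main obstacle will be the compatibility in the construction of $\zq$: ensuring that a single family of lower-weight corrections simultaneously makes $\bst$ hold exactly and preserves $\tau$-invariance. The weight-filtered structure alone leaves substantial freedom in the corrections and does not a priori guarantee a choice reconciling both symmetries. What makes the construction feasible is precisely the bi-bracket / partition-conjugation viewpoint, where $\tau$ becomes a manifest involution on the generating-series side; the remaining verification is then a matching of the $\bst$-combinatorics against the known bi-bracket product, which is the technical core carried out in detail in~\cite{bu2}.
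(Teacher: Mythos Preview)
The paper does not give its own proof of this theorem; it is quoted verbatim from \cite[Theorem 10.4]{bu2}, so there is no in-paper argument to compare against. What the surrounding text does indicate, however, is the mechanism actually used in \cite{bu2}: one first constructs the combinatorial bi-multiple Eisenstein series on the bi-alphabet $\Ybi$ (Theorem~\ref{thm  CMES}), which satisfy the stuffle product and swap invariance, and then transports this structure to $(\QB^0,\bst)$ via an explicit algebra isomorphism $\varphi_{\#}$ (\cite[Theorem 7.10]{bu2}). Under $\varphi_{\#}$ the pair (stuffle, swap) becomes the pair ($\bst$, $\tau$), and the balanced values $\zq(s_1,\ldots,s_l)$ are by definition the images of the $G\binom{k_1,\ldots,k_d}{m_1,\ldots,m_d}$ under this isomorphism.

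Your outline is in the right neighbourhood---you correctly identify the associated-graded relationship, the role of the bi-bracket/CMES construction, and the upper-triangularity argument for surjectivity---but the framing as ``inductively add lower-weight corrections to $\SZ$'' is not how the construction actually proceeds and would be hard to make work directly. An inductive correction scheme gives you far too many degrees of freedom at each weight, and nothing in your sketch pins down a choice that is simultaneously $\bst$-multiplicative and $\tau$-invariant; you acknowledge this as the main obstacle but then resolve it only by pointing back to \cite{bu2}. The actual point is that the isomorphism $\varphi_{\#}$ is what makes the two symmetries compatible \emph{by construction}: swap invariance on the $\Ybi$-side is an identity of generating series, and $\varphi_{\#}$ is engineered so that it intertwines swap with $\tau$ and the $\Ybi$-stuffle with $\bst$. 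Without that explicit transport, your correction procedure remains a heuristic rather than a proof.
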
 \noindent
The elements $\zq(s_1,\ldots,s_l)$, $s_1\geq1,\ s_2,\ldots,s_l\geq0$ are called \emph{balanced multiple $q$-zeta values}.
\vspace{0,2cm} \\
For each $k\geq2$ even, the element $\zq(k)$ equals the classical Eisenstein series of weight $k$ with rational Fourier coefficients. \\
In general the explicit construction of the balanced multiple $q$-zeta values it quite involved, therefore we omit the construction here and refer the interested reader to \cite{bu2}. For the purposes of this paper, it suffices to define the balanced multiple $q$-zeta values as the images of the algebra morphism in Theorem \ref{thm balanced}.
\begin{con} \label{all relations balanced} All algebraic relations in the algebra $\nonformal$ are a consequence of the balanced quasi-shuffle product formula and the $\tau$-invariance of the balanced multiple $q$-zeta values.
\end{con}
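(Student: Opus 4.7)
The plan is to upgrade the surjection $(\QB^0,\bst)\twoheadrightarrow \nonformal$ from Theorem \ref{thm balanced} to a bijection after quotienting by the $\tau$-relations, i.e., to show the realization $\formal\twoheadrightarrow \nonformal$ (constructed in the following section as Theorem \ref{realization of formal}) has trivial kernel. The first step is to record that both the balanced quasi-shuffle product and the involution $\tau$ preserve the weight $\wt(b_{s_1}\cdots b_{s_l})=\sum s_i+\#\{i\mid s_i=0\}$, so the quotient $\formal=(\QB^0,\bst)/\rel$ inherits a grading by weight, and one automatically obtains a surjective graded linear map $\formal\twoheadrightarrow \operatorname{gr}^W\!\nonformal$, where the target is the associated graded for the weight filtration coming from the SZ-stuffle description.

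The second step is to control the graded dimensions $\dim \formal_N$ from above by exploiting the quasi-shuffle Hopf algebra structure of $(\QB,\bst)$. The Hoffman isomorphism for quasi-shuffle algebras exhibits $(\QB,\bst)$ as a polynomial algebra on Lyndon-type generators, and the $\tau$-ideal can then be handled by decomposing $\QB^0$ into $\tau$-symmetric and $\tau$-antisymmetric parts. This gives a concrete upper bound on $\dim \formal_N$ in terms of the number of Lyndon words modulo the involution.

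The third and decisive step is to produce a matching lower bound for $\dim \operatorname{gr}^W \nonformal_N$. This is where I expect the real obstacle to lie. A natural route is to specialize, via the morphism $p:\formal\twoheadrightarrow\Zf$ from Theorem \ref{surjection G^f to Z^f} (which on $\nonformal$ corresponds to the limit $q\to 1$) and via the realization into multiple Eisenstein series, to reduce the lower bound problem to combining (i) a tight dimension lower bound for $\Zf$ and (ii) a tight dimension lower bound for the algebra of quasi-modular forms appearing among the bi-brackets. The latter is available through the classical theory of $\widetilde{\mathcal{M}}^\QQ(\Sl)$, but the former is equivalent to Zagier's dimension conjecture for multiple zeta values.

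This brings the main obstacle into focus: Conjecture \ref{all relations balanced} is strictly stronger than Conjecture \ref{Conj all relations in Z}, because $p\circ (\formal\to\nonformal)^{-1}$ would exhibit $\Zf$ as a quotient of $\nonformal$, so tight dimensional control on $\nonformal$ would transport to tight dimensional control on $\Zf$. Consequently, a full proof seems out of reach with current techniques, and the realistic outcome of the program in this paper is not Conjecture \ref{all relations balanced} itself but a structural framework — the scheme $\BM$, the Hopf algebra structure on $\formal$ parallel to the one on $\Zf/(\zeta^f(2))$, and the embedding $\DM\hookrightarrow\BM$ — that makes partial progress (e.g.\ upper bounds matching the conjectural generating series, or compatibility of weight-graded dimensions with the multiple Eisenstein realization $\formalo\to\mathcal{E}$) accessible by the same methods Racinet used for $\DMo$.
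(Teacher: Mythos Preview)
The statement you are addressing is a \emph{conjecture}, not a theorem; the paper does not attempt to prove it and offers no argument to compare against. Your proposal ultimately arrives at the correct conclusion --- that a full proof is out of reach --- but the intermediate steps you sketch have genuine gaps.

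Your ``second step'' oversells what the Hoffman isomorphism delivers. While $(\QB,\bst)$ is indeed a free polynomial algebra on Lyndon words, the $\tau$-ideal does not interact with the Lyndon basis in any transparent way: $\tau$ is an anti-automorphism for concatenation, not for $\bst$, so splitting $\QB^0$ into $\tau$-symmetric and $\tau$-antisymmetric parts does not translate into a tractable description of $\rel$ in terms of polynomial generators. Computing $\dim\formal_N$ exactly is itself an open problem (equivalent to the Bachmann--K\"uhn dimension conjecture for bi-brackets), so no sharp upper bound is available by this route.

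Your claim that Conjecture~\ref{all relations balanced} is ``strictly stronger'' than Conjecture~\ref{Conj all relations in Z} is not justified by the argument you give. Knowing $\formal\simeq\nonformal$ together with the surjection $p:\formal\twoheadrightarrow\Zf$ would exhibit $\Zf$ as a quotient of $\nonformal$, but that says nothing about injectivity of the realization $\Zf\to\Z$: extra relations among actual multiple zeta values beyond the extended double shuffle ones are not excluded by anything in your chain of maps. The folklore link between the two conjectures passes through the respective \emph{dimension} conjectures, which are consequences of, not equivalent to, the ``all relations'' statements.

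What is accurate is your closing assessment: the paper's contribution is the structural framework (the scheme $\BM$, the embedding $\DM\hookrightarrow\BM$, the surjection $p:\formal\twoheadrightarrow\Zf$), and Conjecture~\ref{all relations balanced} is stated as motivation, not as a result.
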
 \noindent
Similar to the case of multiple zeta values (Conjecture \ref{Conj all relations in Z}), this conjecture will motivate the definition of the algebra $\formal$ given in Section \ref{formal}.

\subsection{Multiple Eisenstein series.} 
Let $\mathbb{H}=\{z\in \mathbb{C}\mid \operatorname{Im}(z)>0\}$ be the upper half plane. For each $k\geq2$ even, the classical Eisenstein series of weight $k$ are given (up to some normalization) by
\begin{align*}
\mathbb{G}_k(z)=\sum_{\substack{m>0 \\ \vee (m=0 \wedge n>0)}} \frac{1}{(mz+n)^k}=\zeta(k)+\frac{(-2\pi i)^k}{(k-1)!}\sum_{m,d>0} d^{k-1}q^{md} \qquad (q=e^{2\pi i z},\ z\in \mathbb{H}).
\end{align*}
To obtain a multiple version of the Eisenstein series, define an order on the lattice $\ZZ z+\ZZ$ (where $z\in \mathbb{H}$) as follows: We have $m_1z+n_1\succ m_2z+n_2$ if and only if $m_1>m_2$ or $(m_1=m_2 \wedge n_1>n_2)$. 
\begin{defi} \label{def MES} (\cite{gkz}) To integers $k_1\geq3, k_2,\ldots,k_d\geq2$ associate the \emph{multiple Eisenstein series}
\[\mathbb{G}_{k_1,\ldots,k_d}(z)=\sum_{\substack{\lambda_1\succ\cdots\succ \lambda_d\succ 0 \\ \lambda_i\in \ZZ z+\ZZ}} \frac{1}{\lambda_1^{k_1}\cdots \lambda_d^{k_d}} \qquad (z\in \mathbb{H}).\]
\end{defi} \noindent
One observes immediately that multiple Eisenstein series are holomorphic functions on the upper half plane $\mathbb{H}$ and that they satisfy the stuffle product formula. Moreover, the multiple Eisenstein series possess a Fourier expansion, where the constant term is given by multiple zeta values. The second building block of the Fourier expansion are the \emph{brackets} (introduced in Bachmann's master thesis and studied in \cite{bk2})
\begin{align} \label{def brackets}
\hspace{-0,7cm}g(k_1,\ldots,k_d)=\sum_{\substack{u_1>\dots>u_d>0 \\ v_1,\ldots,v_d>0}} \frac{v_1^{k_1-1}}{(k_1-1)!}\cdots \frac{v_d^{k_d-1}}{(k_d-1)!}q^{u_1v_1+\dots+u_dv_d}\in \nonformal, \ k_1,\ldots,k_d\geq1.
\end{align}
\begin{prop} \label{Fourier expansion MES} (\cite[Theorem 1.4]{ba2}) There are $\alpha_{l_1,\ldots,l_d,j}^{k_1,\ldots,k_d}\in \ZZ$, such that we have for all $k_1\geq3, k_2,\ldots,k_d\geq2$
\[\mathbb{G}_{k_1,\ldots,k_d}(z)=\zeta(k_1,\ldots,k_d)+\sum_{\substack{0< j< d \\ l_1+\dots+l_d=k_1+\dots+k_d \\ l_1\geq2,l_2,\ldots,l_d\geq1}} \alpha_{l_1,\ldots,l_d,j}^{k_1,\ldots,k_d}\ \zeta(l_1,\ldots,l_j)\hat{g}(l_{j+1},\ldots,l_d)+\hat{g}(k_1,\ldots,k_d),\]
where $\hat{g}(k_1,\ldots,k_d)=(-2\pi i)^{k_1+\dots+k_d}g(k_1,\ldots,k_d)\in \nonformal[\pi i]$.
\end{prop}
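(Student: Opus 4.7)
The strategy is to split the lattice sum according to whether each $\lambda_i=m_iz+n_i$ sits in the ``real'' part ($m_i=0,\ n_i>0$) or in the ``upper'' part ($m_i>0$), then to evaluate the upper part via Lipschitz summation and partial fractions. Since the order $\succ$ is lexicographic with $m$ dominating, in any chain $\lambda_1\succ\cdots\succ\lambda_d\succ 0$ the upper elements necessarily precede the real ones. Hence, grouping by the cut point $j$ (so that $\lambda_1,\ldots,\lambda_j$ are upper and $\lambda_{j+1},\ldots,\lambda_d$ are real) gives the clean splitting
\[
\mathbb{G}_{k_1,\ldots,k_d}(z)=\sum_{j=0}^{d} U_{k_1,\ldots,k_j}(z)\cdot \zeta(k_{j+1},\ldots,k_d),
\]
where $U_{\emptyset}(z)=1$ and $U_{k_1,\ldots,k_j}(z)$ denotes the sum of $(\lambda_1^{k_1}\cdots\lambda_j^{k_j})^{-1}$ over upper chains $\lambda_1\succ\cdots\succ\lambda_j$. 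The extremes $j=0$ and $j=d$ will produce the two ``boundary'' terms $\zeta(k_1,\ldots,k_d)$ and $\hat g(k_1,\ldots,k_d)$.

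\textbf{Evaluating the upper piece.} For a fixed integer sequence $m_1\geq m_2\geq\cdots\geq m_j>0$, I would further split $U$ according to which consecutive $m_i$'s coincide. In runs where $m_i>m_{i+1}$ strictly, the $n_i$'s range freely over $\ZZ$, and the Lipschitz formula
\[
\sum_{n\in\ZZ}\frac{1}{(mz+n)^{k}}=\frac{(-2\pi i)^{k}}{(k-1)!}\sum_{d>0}d^{k-1}q^{md}\qquad(m>0,\ k\geq 2)
\]
turns each Eisenstein-type inner sum into a $q$-series of the type appearing in the definition \eqref{def brackets} of $g$. Taking the product and summing over $m_1>\cdots>m_j>0$ produces exactly $\hat g(k_1,\ldots,k_j)$.

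In a run where several consecutive $m_i$'s are equal, say $m_i=m_{i+1}=\cdots=m_{i+r}$, the constraint $n_i>n_{i+1}>\cdots>n_{i+r}$ prevents a direct use of Lipschitz. Here I would apply iterated partial fractions of the form
\[
\frac{1}{X^{a}Y^{b}}=\sum_{s}\Bigl(\binom{s-1}{a-1}\tfrac{(-1)^{a+s}}{(Y-X)^{s}}\tfrac{1}{Y^{a+b-s}}+\binom{s-1}{b-1}\tfrac{(-1)^{b+s}}{(X-Y)^{s}}\tfrac{1}{X^{a+b-s}}\Bigr),
\]
(and its multi-variable iteration) to rewrite each product $\prod_{i}(mz+n_i)^{-k_i}$ as an integer combination of terms that depend on the differences $n_i-n_{i+1}$ (summed over positive integers, producing $\zeta$-like pieces) and on a single surviving variable $n$ (to which Lipschitz now applies). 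Summing over the surviving $n$'s via Lipschitz and then over $m>0$ yields $\hat g$-terms of strictly lower depth but the same total weight as the run.

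\textbf{Collecting and reorganizing.} Collating the various runs, $U_{k_1,\ldots,k_j}(z)$ becomes an integer linear combination
\[
U_{k_1,\ldots,k_j}(z)=\sum_{\substack{s_1+\cdots+s_{j'}=k_1+\cdots+k_j\\ s_1,\ldots,s_{j'}\geq 2}} c_{s_1,\ldots,s_{j'}}^{k_1,\ldots,k_j}\ \hat g(s_1,\ldots,s_{j'}),
\]
where the coefficients are integer combinations of the binomial coefficients produced by partial fractions, and $j'\leq j$. Multiplying by $\zeta(k_{j+1},\ldots,k_d)$ and using commutativity of scalar multiplication to put $\zeta$ on the left, then relabelling the indices globally as $(l_1,\ldots,l_d)$ with $l_1\geq 2$ (which follows from $k_{j+1}\geq 2$ or $s_1\geq 2$), gives the asserted expansion with integer coefficients $\alpha_{l_1,\ldots,l_d,j}^{k_1,\ldots,k_d}$.

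\textbf{Main obstacle.} The conceptually delicate part is the combinatorics of the ties, namely showing that the nested partial-fraction expansion terminates cleanly and produces integer coefficients, and that the resulting $\hat g$-indices respect the admissibility constraint $l_1\geq 2$ after commutation with $\zeta$. The convergence of each intermediate sum also needs attention, since individual terms in the partial-fraction expansion need not converge absolutely; one circumvents this by summing the symmetric combination coming from the full constraint $n_i>n_{i+1}$ before decomposing, or by working in the framework of stuffle-regularised iterated sums so that the identities hold at the level of generating series.
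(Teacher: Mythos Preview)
The paper does not prove this proposition; it is quoted from \cite[Theorem~1.4]{ba2}. Your outline follows the same strategy as that reference and the earlier work of Gangl--Kaneko--Zagier and Bouillot: split the ordered lattice sum according to whether each $m_i$ is positive or zero, evaluate the free $n$-sums by the Lipschitz formula, and handle tied $m$-values by partial fractions. So the overall plan is the right one.

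There is, however, a genuine gap in your ``Collecting and reorganizing'' step. When a block of equal $m$-values is resolved by partial fractions, the decomposition produces, as you yourself say, $\zeta$-like pieces coming from the differences $n_i-n_{i+1}$ \emph{in addition to} a surviving monotangent factor. Consequently $U_{k_1,\ldots,k_j}(z)$ is \emph{not} an integer combination of pure $\hat g$'s as you write; already for $j=2$ one has
\[
U_{k_1,k_2}(z)=\hat g(k_1,k_2)+\sum_{l} c_l\,\zeta(l)\,\hat g(k_1+k_2-l),
\]
with nontrivial $\zeta$-factors coming from the tied case $m_1=m_2$. In general $U_{k_1,\ldots,k_j}$ is an integer combination of products $\zeta(\cdots)\,\hat g(\cdots)$, one $\zeta$-factor per tied block. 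Multiplying by the additional $\zeta(k_{j+1},\ldots,k_d)$ from the real block then leaves products of \emph{several} multiple zeta values times a single $\hat g$, and your write-up does not explain how these are combined into a single $\zeta(l_1,\ldots,l_{j})$ of the correct depth, nor why the total number of indices remains exactly $d$. Invoking the stuffle product naively here lowers depth, so one needs the depth-preserving organisation of the multitangent--to--monotangent reduction carried out carefully in \cite{ba2} (building on Bouillot) rather than the shortcut you describe.
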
 \noindent
A very natural question is how to determine the algebraic relations between multiple Eisenstein series. Since the multiple Eisenstein series satisfy the stuffle product formula and in their Fourier expansion the multiple zeta values occur, one could expect that they also satisfy a variant or subset of the extended double shuffle relations.

\subsection{Combinatorial multiple Eisenstein series.} 
To attack the above question, we will restrict to $q$-series with rational coefficients. Precisely, this means that we take a rational solution to the extended double shuffle equations $\beta(k_1,\ldots,k_d)$, $k_1\geq2,k_2,\ldots,k_d\geq1$, instead of the multiple zeta values. To obtain for $d=1$ the classical Eisenstein series of weight $k$ with rational coefficients, we additionally fix
\begin{align*}
\beta(k)=\frac{\zeta(k)}{(2\pi i)^k}=-\frac{B_k}{2k!},\quad k\text{ even},\qquad \qquad \beta(k)=0, \quad k\text{ odd}.
\end{align*}
Moreover, we will use the $q$-series $g(k_1,\ldots,k_d)\in\nonformal$ given in \eqref{def brackets} instead of the series $\hat{g}(k_1,\ldots,k_r)\in\nonformal[\pi i]$. Combining these rational solutions $\beta(k_1,\ldots,k_d)$ and a bi-version of the brackets $g(k_1,\ldots,k_d)$ inspired by the Fourier expansion of the multiple Eisenstein series (Proposition \ref{Fourier expansion MES}), one obtains the following.
\begin{thm} \label{thm  CMES} (\cite{bb}) Let $\Ybi=\{y_{k,m}\mid k\geq1,m\geq0\}$ be an alphabet and $(\QYbi,\ast)$ be the stuffle algebra, i.e., the product $\ast$ on $\QYbi$ is given by $\1\ast w=w\ast\1=w$ and 
\[y_{k_1,m_1}u\ast y_{k_2,m_2}v=y_{k_1,m_1}(u\ast y_{k_2,m_2}v)+y_{k_2,m_2}(y_{k_1,m_1}u\ast v)+y_{k_1+k_2,m_1+m_2}(u\ast v)\]
for all $u,v,w\in \QYbi$. There exists a surjective algebra morphism
\begin{align*}
(\QYbi,\ast)&\longrightarrow \nonformal, \\
y_{k_1,m_1}\cdots y_{k_d,m_d}&\longmapsto G\binom{k_1,\ldots,k_d}{m_1,\ldots,m_d}.
\end{align*}
Additionally, the generating series
\[\mathfrak{G}\binom{X_1,\ldots,X_d}{Y_1,\ldots,Y_d}=\sum_{\substack{k_1,\ldots,k_d\geq 1 \\ m_1,\ldots,m_d\geq0}} G\binom{k_1,\ldots,k_d}{m_1,\ldots,m_d}X_1^{k_1-1}\frac{Y_1^{m_1}}{m_1!}\cdots X_d^{k_d-1}\frac{Y_d^{m_d}}{m_d!}\]
are swap invariant, i.e., we have for all $d\geq1$
\[\mathfrak{G}\binom{X_1,\ldots,X_d}{Y_1,\ldots,Y_d}=\mathfrak{G}\binom{Y_1+\dots+Y_d,\ldots,Y_1+Y_2,Y_1}{X_d,X_{d-1}-X_d,\ldots, X_1-X_2}.\]
\end{thm}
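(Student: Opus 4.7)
The plan is to imitate, on the combinatorial side, the Fourier expansion of multiple Eisenstein series in Proposition \ref{Fourier expansion MES}. Let $g\binom{k_1,\ldots,k_d}{m_1,\ldots,m_d}$ denote the bi-brackets of \cite{ba} (a bi-indexed extension of the brackets in \eqref{def brackets}), and let $\beta\binom{k_1,\ldots,k_d}{m_1,\ldots,m_d}$ be a chosen rational solution to the bi-version of the extended double shuffle equations, normalized at depth one to recover the classical Eisenstein series (so that $\beta(k) = -B_k/(2\,k!)$ for $k$ even and $\beta(k) = 0$ for $k$ odd). Motivated by the shape of the Fourier expansion, one then defines
\[\cmes{k_1,\ldots,k_d}{m_1,\ldots,m_d} \;=\; \sum_{j=0}^{d} \beta\binom{k_1,\ldots,k_j}{m_1,\ldots,m_j}\, g\binom{k_{j+1},\ldots,k_d}{m_{j+1},\ldots,m_d},\]
with the $j=0$ and $j=d$ summands being the pure bracket and pure $\beta$ contributions.

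The stuffle morphism is then checked as follows. Both the bi-brackets and the chosen $\beta$ satisfy the stuffle product on $\Ybi$: for the bi-brackets this follows from the standard infinite-nested-sum combinatorics, parallel to \eqref{SZ stuffle}; for $\beta$ it is part of the defining bi-extended double shuffle structure. A direct coproduct-compatibility computation then shows that the stuffle of two $G$'s, expanded via the displayed formula, reorganizes into a single sum over a splitting index $j$, which gives the required stuffle algebra morphism on the nose. Surjectivity is immediate since the bi-brackets already span $\nonformal$ by \cite{bk}, and they appear as the $j=0$ terms in the $G$-family.

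Finally, the swap invariance of $\mathfrak{G}$ is inherited from swap invariance of the generating series of the bi-brackets together with that of the generating series of $\beta$. Swap invariance of the bi-brackets comes from partition conjugation: in Stanley coordinates $\lambda=((u_i),(v_i))$ the $X$-variables track the $u_i$ while the $Y$-variables track the $v_i$, and conjugation exactly exchanges their roles (compare the pictorial argument around \eqref{SZ variable substitution}). The principal obstacle is the construction of $\beta$ to be simultaneously compatible with the bi-stuffle product \emph{and} swap invariant, since in higher depth these two requirements heavily constrain each other; engineering such a $\beta$ is precisely the technical heart of \cite{bb}, and is what allows $\mathfrak{G}$ to satisfy both properties at once.
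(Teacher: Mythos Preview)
Your sketch captures the spirit of the construction but the concrete formula you propose,
\[
G\binom{k_1,\ldots,k_d}{m_1,\ldots,m_d}\stackrel{?}{=}\sum_{j=0}^d \beta\binom{k_1,\ldots,k_j}{m_1,\ldots,m_j}\, g\binom{k_{j+1},\ldots,k_d}{m_{j+1},\ldots,m_d},
\]
is too naive, and this is a genuine gap. The stuffle part is fine: since $(\QYbi,\ast,\dec)$ is a Hopf algebra and both $\beta$ and $g$ are characters, their convolution along $\dec$ is again a character, which is exactly your ``coproduct-compatibility computation''. The problem is swap invariance. The swap transformation $(X_1,\ldots,X_d;Y_1,\ldots,Y_d)\mapsto(Y_1+\cdots+Y_d,\ldots,Y_1;X_d,X_{d-1}-X_d,\ldots,X_1-X_2)$ mixes the variables \emph{across} any cut point $j$, so swap of the product series $\mathfrak{b}\cdot\mathfrak{g}$ is \emph{not} the product of the swapped factors. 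Already in depth $2$ the cross term $\beta\binom{X_1}{Y_1}g\binom{X_2}{Y_2}$ becomes, after the change of variables, $\beta\binom{Y_1+Y_2}{X_2}g\binom{Y_1}{X_1-X_2}$, which has no reason to recombine into the original sum even if $\mathfrak{b}$ and $\mathfrak{g}$ are individually swap invariant. So the obstacle is not only to engineer a swap-invariant $\beta$; the very combination rule must be modified.

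This is also visible in the paper: Proposition~\ref{Fourier expansion MES} shows that the genuine Fourier expansion carries nontrivial integer coefficients $\alpha^{k_1,\ldots,k_d}_{l_1,\ldots,l_d,j}$ with the inner indices reshuffled, not a bare splitting of $(k_1,\ldots,k_d)$. The construction in \cite{bb} (which is only cited here, not reproved) accordingly does not take a simple two-factor deconcatenation; it builds $\mathfrak{G}$ through a more elaborate product of generating series (involving an additional intermediate factor and a specific twist of the $\beta$-part) designed precisely so that the swap identity holds on the nose while stuffle is preserved. Your outline would need to replace the displayed formula by that refined construction and then argue swap invariance via the compatibility of the swap with that particular product, not via factor-wise invariance.
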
 \noindent
We call the elements $G\binom{k_1,\ldots,k_d}{m_1,\ldots,m_d}$, $k_1,\ldots,k_d\geq1,\ m_1,\ldots,m_d\geq0$ the \emph{combinatorial bi-multiple Eisenstein series}. We set for all $k_1,\ldots,k_d\geq1$
\begin{align*}
G(k_1,\ldots,k_d)=G\binom{k_1,\ldots,k_d}{0,\ldots,0},
\end{align*}
and call those elements \emph{combinatorial multiple Eisenstein series}. 
\vspace{0,2cm}\\
The explicit construction of the combinatorial bi-multiple Eisenstein series is given in \cite{bb}. For the following, it suffices to define them as the images of the algebra morphism given in Theorem \ref{thm CMES}.
\vspace{0,2cm} \\
Combinatorial multiple Eisenstein series should be seen as the rational version of multiple Eisenstein series given in Definition \ref{def MES}. So, we expect them to satisfy the same relations as (regularized) multiple Eisenstein series.
\begin{ex} For $k>m\geq0$, we obtain
\begin{align*}
G\binom{k}{m}&=-\delta_{m,0}\frac{B_k}{2k!}-\delta_{k,1}\frac{B_{m+1}}{2(m+1)}+\frac{1}{(k-1)!}\sum_{u,v\geq1}u^mv^{k-1}q^{uv} \\
&=\frac{(k-m-1)!}{(k-1)!}\left(q\frac{\diff}{\diff q}\right)^mG(k-d).
\end{align*} 
\end{ex} \noindent
So these bi-indices essentially mean that we also include partial derivatives of the combinatorial multiple Eisenstein series. This allows us to find a variant of the double shuffle relations for the combinatorial bi-multiple Eisenstein series. 
\begin{ex} Combining the product formula and the swap invariance from Theorem \ref{thm  CMES} we obtain for $k_1,k_2\geq1$
\begin{align*}
G(k_1)G(k_2)&=G(k_1,k_2)+G(k_2,k_1)+G(k_1+k_2) \\
&=\sum_{j=1}^{k_1+k_2-1} \left(\binom{j-1}{k_1-1}+\binom{j-1}{k_2-1}\right) G(j,k_1+k_2-j)\\
&\hspace{0,8cm}+\binom{k_1+k_2-2}{k_1-1}G\binom{k_1+k_2-1}{1}.
\end{align*}
\end{ex} \noindent
Conjecturally, these bi-indices do not yield any new elements and are just a nice tool to describe the relations and product formulas of the combinatorial multiple Eisenstein series. Every combinatorial bi-multiple Eisenstein series should be a $\QQ$-linear combination of combinatorial multiple Eisenstein series. In other words, we expect the following.
\begin{con} \label{conj G^{f,0} equals G^f}There is a surjective algebra morphism
\begin{align*}
(\QY,\ast)&\longrightarrow \nonformal,\\ 
y_{k_1}\cdots y_{k_d}&\longmapsto G(k_1,\ldots k_d).
\end{align*}
\end{con}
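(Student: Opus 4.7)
The statement has two components: the map is a well-defined algebra morphism, and it is surjective. The first is essentially automatic. The assignment $y_k\mapsto y_{k,0}$ extends to an inclusion of stuffle algebras $\iota:(\QY,\ast)\hookrightarrow(\QYbi,\ast)$, because the recursion for the product on $\QYbi$ specializes, when all upper indices vanish, to the recursion on $\QY$: the stuffle term $y_{k_1+k_2,m_1+m_2}$ becomes $y_{k_1+k_2,0}$ as required. Composing $\iota$ with the surjection provided by Theorem \ref{thm  CMES} produces an algebra morphism sending $y_{k_1}\cdots y_{k_d}$ to $G\binom{k_1,\ldots,k_d}{0,\ldots,0}=G(k_1,\ldots,k_d)$.

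The whole content of the conjecture is therefore surjectivity: every combinatorial bi-multiple Eisenstein series $G\binom{k_1,\ldots,k_d}{m_1,\ldots,m_d}$ must be a $\QQ$-linear combination of $G(l_1,\ldots,l_e)$'s. The natural tool is the swap invariance from Theorem \ref{thm  CMES}. In depth one, equating the coefficient of $X^{k-1}Y^m$ on both sides of $\mathfrak{G}\binom{X_1}{Y_1}=\mathfrak{G}\binom{Y_1}{X_1}$ already yields the scalar identity
\[G\binom{k}{m}=\frac{m!}{(k-1)!}\,G\binom{m+1}{k-1},\]
so in particular $G\binom{1}{k-1}=(k-1)!\,G(k)$ and every depth-one bi-combinatorial MES is a rational multiple of an ordinary one. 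For higher depth the plan would be to induct on the pair $(\wt,\,m_1+\dots+m_d)$, ordered lexicographically: apply the swap to a target with some $m_i>0$, extract coefficients, and produce a linear relation between the target and bi-combinatorial MES of the same weight but strictly smaller total upper index, modulo products of lower-depth bi-combinatorial MES that can be unfolded via the stuffle product formula and then reduced by the inductive hypothesis.

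The main obstacle, and the reason the statement is still only a conjecture, is that for $d\geq 2$ the swap
\[\mathfrak{G}\binom{X_1,\ldots,X_d}{Y_1,\ldots,Y_d}=\mathfrak{G}\binom{Y_1+\dots+Y_d,\ldots,Y_1+Y_2,Y_1}{X_d,X_{d-1}-X_d,\ldots,X_1-X_2}\]
mixes the bi-indices of fixed weight in a combinatorially dense way: expanding the right hand side produces a broad linear combination of bi-combinatorial MES, and it is not clear that one can fix an ordering in which the resulting system is triangular enough to isolate a target with some $m_i>0$ in terms of bi-combinatorial MES of strictly smaller upper indices. The statement is moreover a combinatorial sharpening of Bachmann's conjecture that the brackets alone span $\nonformal$, so any proof should presumably also resolve that conjecture, placing it out of reach of the techniques available here. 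A plausible route forward would be to introduce a formal analogue of $\formal$ attached to $(\QYbi,\ast)$ together with its swap invariance, and to show that its subalgebra generated by the $y_{k,0}$-words coincides with the whole; but this merely recasts the same combinatorial difficulty in formal terms.
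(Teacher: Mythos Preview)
The paper does not prove this statement: it is stated as a Conjecture and the only comment the paper offers is that it is equivalent to Bachmann's conjecture that brackets and bi-brackets span the same space. Your proposal correctly identifies this situation: you observe that the algebra-morphism part is automatic via the embedding $(\QY,\ast)\hookrightarrow(\QYbi,\ast)$ followed by Theorem~\ref{thm CMES}, and that the entire content lies in surjectivity, which remains open. Your depth-one swap computation is correct, and your explanation of why the inductive strategy stalls in higher depth is a fair account of the difficulty.

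One small addition worth making explicit: the paper notes the equivalence with Bachmann's bracket/bi-bracket conjecture, which you also mention; this equivalence is really the only structural information the paper gives about the conjecture, so your write-up is in fact more detailed than the paper's own treatment. There is nothing to correct here beyond perhaps stating up front that no proof is being claimed, since the task was to compare with the paper's proof and the paper has none.
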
 \noindent
This is equivalent to Bachmann's conjecture that brackets and bi-brackets span the same space (see also \cite[Remark 6.16]{bb}). 
\vspace{0,2cm} \\
We expect that all algebraic relations between combinatorial (bi-)multiple Eisenstein series, and hence also between multiple Eisenstein series are a consequence of the stuffle product formula and the swap invariance in Theorem \ref{thm  CMES}. Under the isomorphism $\varphi_{\#}$ given in \cite[Theorem 7.10]{bu2}, these two kinds of relations translate into the balanced quasi-shuffle product and $\tau$-invariance. Therefore, the algebra $\formal$ defined in the following Section \ref{formal} is also a formalization of combinatorial (bi-)multiple Eisenstein series.
\begin{rem}
(i) A formal version of the combinatorial bi-multiple Eisenstein series of depth $\leq2$ was already studied in detail in \cite{bkm}. 
\vspace{0,1cm} \\
(ii) In \cite{bim}, they also study a formalization of the combinatorial bi-multiple Eisenstein series. This means, up to the isomorphism  $\varphi_{\#}$, they also study the algebra $\formal$. Their focus lies on derivatives on the algebra $\formal$. Similar to the case of quasi-modular forms, they obtain an $\mathfrak{sl}_2$-action on $\formal$.
\end{rem} 

\section{The algebra $\formal$} \label{formal}

\noindent
We introduce the algebra $\formal$ and give some of its basic properties. Then, we present the realization of $\formal$ into the algebra $\nonformal$ obtained from the balanced multiple $q$-zeta values. Recall that $(\QB,\bst)$ denotes the balanced quasi-shuffle algebra given in Definition \ref{bst algebra}.
\begin{defi} \label{def formal qMZV}
We set
\[\formal=\faktor{(\QB,\bst)}{\rel},\]
where $\rel$ is the ideal in $(\QB,\bst)$ generated by the set $\{b_0\}\cup\{w-\tau(w)\mid w\in\QB^0\}$.
\end{defi} \noindent
Denote by $\symb(w)$ the class of an element $w\in \QB$ in the quotient space $\formal$ and set $\symb(\1)=1$. Then $\formal$ is the algebra spanned by the elements $\symb(w)$, $w\in \B^*$, which exactly satisfy the following relations
\renewcommand{\arraystretch}{1,1}
\begin{center} \centering \begin{tabular}{cclcl}		
(i) & $\symb(b_0)$ & $=$ & $0$, & \\ 
(ii) & $\symb(v\bst w)$ & $=$ & $\symb(v)\symb(w)$, & $v,w\in\QB$, \\
(iii) & $\symb(\tau(w))$ & $=$ & $\symb(w)$, & $w\in \QB^0$. \\
\end{tabular} \end{center} \renewcommand{\arraystretch}{1}
In (ii) the expression $\symb(v)\symb(w)$ denotes the product induced by $\bst$ in the quotient algebra $\formal$. The algebra $\formal$ inherits the notion of weight and depth from the algebra $\QB$, i.e., we set
\begin{align*}
\wt(\symb(b_{s_1}\cdots b_{s_l}))=s_1+\dots+s_l+\#\{i\mid s_i=0\}, \qquad \dep(\symb(b_{s_1}\cdots b_{s_l}))=l-\#\{i\mid s_i=0\}.
\end{align*}
Since both the balanced quasi-shuffle product and the involution $\tau$ are homogeneous in weight, the algebra $\formal$ is graded by weight.
\vspace{0,2cm} \\
By definition, the algebra $\formal$ is equipped with a universal property. For every $\QQ$-algebra $R$ and every algebra morphism 
\[\varphi:(\QB,\bst)\longrightarrow R,\] 
which is $\tau$-invariant on $\QB^0$ and satisfies $\varphi(b_0)=0$, there exists a unique algebra morphism $\widetilde{\varphi}:\formal\to  R$, such that the following diagram commutes
\begin{equation} \label{universal property} 
\begin{tikzcd} \QB \arrow[drr, "\varphi"] \arrow[rr,"\symb"] && \formal \arrow[d,dashrightarrow,"\widetilde{\varphi}"] \\ &&  R.
\end{tikzcd}
\end{equation} 
\begin{defi} A \emph{realization} of the algebra $\formal$ is a pair $(R,\varphi)$, where $R$ is a $\QQ$-algebra and $\varphi:\formal\to R$ is a surjective algebra morphism into $R$.
\end{defi}
\begin{thm} \label{realization of formal} There exists a realization of $\formal$ into $\nonformal$ given by
\begin{align*} 
\formal&\longtwoheadrightarrow \nonformal, \\
\symb(w)&\longmapsto\zqreg(w).
\end{align*}
\end{thm}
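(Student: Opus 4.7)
My plan is to invoke the universal property of $\formal$ recorded in diagram~\eqref{universal property}. It suffices to construct a $\bst$-algebra morphism $\zqreg:(\QB,\bst)\to\nonformal$ satisfying (a) $\zqreg(b_0)=0$, (b) $\zqreg|_{\QB^0}=\zq$ (as in Theorem~\ref{thm balanced}), and (c) $\tau$-invariance on $\QB^0$. Condition (c) will follow from (b) together with the $\tau$-invariance of $\zq$ guaranteed by Theorem~\ref{thm balanced}; the factorization through $\formal$ and the surjectivity of the resulting map are then immediate.

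To produce $\zqreg$, I would first verify that $\QB^0$ is a $\bst$-subalgebra of $\QB$: every term in $b_iu\bst b_jv$ with $i,j\geq 1$ begins in $b_i$, $b_j$, or $b_{i+j}$, all of positive index, so $\QB^0$ is closed under $\bst$. Next, I would establish the key structural claim that the multiplication map
\[
  (\QB^0,\bst)\otimes_{\QQ}\QQ[b_0]_{\bst}\longrightarrow (\QB,\bst)
\]
is an isomorphism of $\bst$-algebras, where $\QQ[b_0]_{\bst}$ denotes the sub-$\bst$-algebra of $\QB$ generated by $b_0$. Equivalently, every element of $\QB$ admits a unique decomposition as a $\bst$-polynomial in elements of $\QB^0$ and powers of $b_0$, realizing $(\QB,\bst)$ as a polynomial extension of $(\QB^0,\bst)$ by the free generator $b_0$.

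Given this polynomial structure, the extension $\zqreg$ is obtained as the composition
\[
  (\QB,\bst)\;\longrightarrow\;(\QB^0,\bst)\;\overset{\zq}{\longrightarrow}\;\nonformal,
\]
where the first arrow is the canonical projection $b_0\mapsto 0$, well-defined by the polynomial decomposition. Properties (a)--(c) are then transparent, and applying the universal property in diagram~\eqref{universal property} yields the desired surjective algebra morphism $\symb(w)\mapsto\zqreg(w)$.

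The main obstacle is the polynomial-extension isomorphism $(\QB,\bst)\cong (\QB^0,\bst)\otimes_{\QQ}\QQ[b_0]_{\bst}$. The balanced quasi-shuffle treats $b_0$ asymmetrically---the contraction $b_i\diamond b_j$ vanishes whenever an index is zero, so $b_0$ interacts only through shuffle-type insertions---and one must check that no hidden relations identify non-zero elements of $\QB^0$ with elements of the $\bst$-ideal generated by $b_0$. The cleanest route is via Hoffman's exponential isomorphism $(\QB,\bst)\cong(\QB,\shuffle)$ (which preserves both $\QB^0$ and fixes $\QQ[b_0]$ since $b_0\diamond b_0=0$), reducing the claim to the analogous decomposition $(\QB,\shuffle)\cong(\QB^0,\shuffle)\otimes\QQ[b_0]$ in the pure shuffle algebra; this can be verified by induction on word length using the identity $b_0w=b_0\shuffle w-\sum(\text{insertions of }b_0\text{ past the first letter})$ to recursively rewrite any word starting with $b_0$.
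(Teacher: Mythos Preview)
Your approach is essentially identical to the paper's. The polynomial-extension isomorphism $(\QB,\bst)\cong(\QB^0,\bst)\otimes_{\QQ}\QQ[b_0]_{\bst}$ that you isolate as the key structural input is precisely the regularization isomorphism $\regT:\QB^0[T]\to\QB$, $wT^n\mapsto w\bst b_0^{\bst n}$, which the paper quotes from \cite[Proposition 6.2]{bu2}; your projection ``$b_0\mapsto 0$'' is then exactly the map $\reg=(\text{ev}_{T=0})\circ\regT^{-1}$, and the rest of the argument (compose with $\zq$, invoke the universal property~\eqref{universal property}) matches the paper line for line.

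The only substantive difference is that you sketch a proof of the polynomial decomposition via Hoffman's exponential isomorphism $(\QB,\bst)\cong(\QB,\shuffle)$, whereas the paper simply cites it. Your reduction is sound: since $b_0\bdia b_j=0$ for all $j$, the exponential map fixes each $b_0^n$ and preserves $\QB^0$ (any block beginning with $b_0$ of size $\geq 2$ vanishes, and any block beginning with $b_i$, $i\geq 1$, either vanishes or yields a letter of positive index), so the problem does reduce to the classical shuffle regularization $(\QB,\shuffle)\cong(\QB^0,\shuffle)[b_0]$. That last step is standard but your one-line inductive hint could be fleshed out; the cleanest version is to note that $b_0^{\shuffle n}\shuffle w$ for $w\in\QB^0$ has leading term $n!\,b_0^n w$ modulo words with fewer leading $b_0$'s, and conclude by a triangularity argument.
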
 \noindent
The elements $\zqreg(w)$ are the regularized balanced multiple $q$-zeta values and will be explained below.
\vspace{0,2cm} \\
As a reformulation of Conjecture \ref{all relations balanced}, we expect the map in Theorem \ref{realization of formal} to be an isomorphism of weight-graded algebras. In particular, the algebra $\formal$ should determine all algebraic relations between multiple $q$-zeta values and multiple Eisenstein series.
\begin{proof}
By Theorem \ref{thm balanced}, there is a $\tau$-invariant algebra morphism
\begin{align*}
\zq:(\QB^0,\bst)&\longrightarrow \nonformal, \\
b_{s_1}\cdots b_{s_l}&\longmapsto \zq(s_1,\ldots,s_l).
\end{align*}
In \cite[Proposition 6.2]{bu2} a regularization map is given. Precisely, we have an algebra isomorphism with respect to the balanced quasi-shuffle product
\begin{align*}
\regT:\QB^0[T]&\longrightarrow \QB, \\
wT^n&\longmapsto w\bst b_0^{\bst n}.
\end{align*}
Applying the inverse $\regT^{-1}:\QB\to \QB^0[T]$ and then evaluating in $T=0$ yields the regularization morphism
\begin{align} \label{regularization morphism}
\reg:(\QB,\bst)\longrightarrow(\QB^0,\bst).
\end{align}
By construction, the restriction of $\reg$ to $\QB^0$ is just the identity. The \emph{regularized balanced multiple $q$-zeta values} are given by 
\[\zq^{\reg}(w)=\zq(\reg(w)),\qquad w\in \QB.\]
A consequence of Theorem \ref{thm balanced} and the regularization process is that there is a surjective algebra morphism
\begin{align*}
(\QB,\bst)&\longrightarrow \nonformal, \\
w&\longmapsto\zq^{\reg}(w),
\end{align*}
which is $\tau$-invariant on $\QB^0$ and satisfies $\zq^{\reg}(b_0)=0$. Applying the universal property \eqref{universal property} to the map $\zq^{\reg}:\QB\to \nonformal$, we obtain the desired realization of $\formal$ in $\nonformal$.
\end{proof} \noindent
For each $k\geq2$, the element $\zq(k)$ is the classical Eisenstein series $G(k)$ of weight $k$ with rational Fourier coefficients. In particular, the preimages $\symb(b_2),\ \symb(b_4)$, and $\symb(b_6)$ under the map $\formal\twoheadrightarrow \nonformal$ in Theorem \ref{realization of formal} must be algebraically independent. We deduce that there is an algebra isomorphism
\begin{align} \label{formal quasimod}
\QQ[\symb(b_2),\symb(b_4),\symb(b_6)]\simeq \quasimod,
\end{align}
where $\quasimod=\QQ[G(2),G(4),G(6)]$ denotes the algebra of quasi-modular forms with rational coefficients. Thus, one should view the elements in $\QQ[\symb(b_2),\symb(b_4),\symb(b_6)]$ as formal quasi-modular forms. A more structural description for the formal (quasi-)modular forms in terms of derivatives will be given in \cite{bim}.

\section{The balanced quasi-shuffle Hopf algebra} \label{balanced quasi-shuffle Hopf algebra}

\noindent
To give the affine scheme corresponding to the algebra $\formal$, we first explain the balanced quasi-shuffle algebra and determine its completed dual. We show that these Hopf algebras are a natural combination of the shuffle Hopf algebra $(\QX,\shuffle,\dec)$ and the stuffle Hopf algebra $(\QY,\ast,\dec)$ and their duals considered in Section \ref{MZV}. 
\vspace{0,2cm} \\
Let $\dec$ be the deconcatenation coproduct on $\QB$. From \cite[Theorem 3.1, 3.2]{h}, we immediately obtain the following.
\begin{prop} \label{QB Hopf algebra} The tuple $(\QB,\bst,\dec)$ is a weight-graded, commutative Hopf algebra.
\end{prop}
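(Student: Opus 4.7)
The plan is to realize $(\QB,\bst,\dec)$ as an instance of Hoffman's quasi-shuffle Hopf algebra construction and then verify that everything is compatible with the weight grading. First, I would define an auxiliary commutative (non-unital) associative product $\diamond$ on the $\QQ$-vector space $\QQ\B$ spanned by the alphabet by
\[ b_i \diamond b_j = \begin{cases} b_{i+j}, & i,j\geq 1, \\ 0, & \text{otherwise.} \end{cases} \]
Commutativity is immediate from the symmetry of the formula in $i,j$, and associativity is a case distinction: if $i,j,k\geq 1$ both $(b_i\diamond b_j)\diamond b_k$ and $b_i\diamond (b_j\diamond b_k)$ equal $b_{i+j+k}$, while if any index is $0$ both sides are zero because $b_0\diamond b_\ell=0$ for every $\ell$. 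Thus $(\QQ\B,\diamond)$ is a commutative, associative, non-unital algebra.

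Next, I would observe that with this $\diamond$, the recursive definition of $\bst$ is precisely the quasi-shuffle product built from $\diamond$ on the tensor algebra $\QB$ in the sense of Hoffman. Applying \cite[Theorem 3.1]{h} gives that $(\QB,\bst)$ is a commutative, associative, unital $\QQ$-algebra, and \cite[Theorem 3.2]{h} gives that $(\QB,\bst,\dec)$, where $\dec$ is deconcatenation, is a commutative Hopf algebra (the antipode being induced by the general Hoffman formula).

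Finally, I would check that the Hopf algebra structure respects the weight grading defined by $\wt(b_{s_1}\cdots b_{s_l})=s_1+\dots+s_l+\#\{i\mid s_i=0\}$. Grading by $\dec$ is immediate since $\wt$ is additive under concatenation. For $\bst$, I would do a quick induction on the sum of the lengths of the two factors: in the recursive formula, the terms $b_i(u\bst b_jv)$ and $b_j(b_iu\bst v)$ have weight $\wt(b_iu)+\wt(b_jv)$ by the inductive hypothesis, and when $i,j\geq 1$ the extra term $b_{i+j}(u\bst v)$ contributes weight $(i+j)+\wt(u)+\wt(v)=\wt(b_i)+\wt(b_j)+\wt(u)+\wt(v)$ as required; in the remaining case at least one of $i,j$ is $0$ and the third term vanishes, consistent with the fact that $b_0\diamond b_\ell$ would contribute the wrong weight (weight $\ell$ rather than $\ell+1$). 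Hence $\bst$ is weight-homogeneous, and together with the graded coproduct $\dec$ this completes the verification. There is no real obstacle here; the only point requiring care is the asymmetry in the weight convention $\wt(b_0)=1$, which is exactly what forces the "else" branch of $\bst$ to be zero rather than containing a $b_{i+j}$-term.
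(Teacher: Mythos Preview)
Your proposal is correct and follows essentially the same approach as the paper: the paper simply invokes \cite[Theorem~3.1, 3.2]{h} without further comment, and you have spelled out exactly why these results apply (the auxiliary product $\diamond$ on $\QQ\B$) together with the weight-homogeneity verification that the paper leaves implicit.
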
 \noindent
For any commutative $\QQ$-algebra $R$ with unit, let $\RBc{R}$ be the free algebra of non-commutative power series in the alphabet $\B$ with coefficients in $R$.
\begin{defi} Define the coproduct $\bco:\RBc{R}\to\RBc{R}\otimes\RBc{R}$ by
\begin{align*} 
\bco(b_i)&=\1\otimes b_i+b_i\otimes \1+\sum_{j=1}^{i-1} b_j\otimes b_{i-j}, \qquad i\geq 0,
\end{align*}
and extend this with respect to the concatenation product.
\end{defi}
\begin{prop} \label{dual Hopf algebra} The tuple $(\RBc{ R},\conc,\bco)$ is a complete, cocommutative Hopf algebra. The pairing
\begin{align*}
\RBc{ R}\otimes \QB&\longrightarrow R, \\
\Phi\otimes w&\longmapsto(\Phi\mid w),	
\end{align*}
where $(\Phi\mid w)$ denotes the coefficient of $w$ in $\Phi$, gives a duality between the weight-graded Hopf algebra $(\QB,\bst,\dec)$ and the complete Hopf algebra $(\RBc{ R},\conc,\bco)$.
\end{prop}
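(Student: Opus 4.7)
The strategy is to deduce the proposition from the general principle that the graded dual of a connected graded Hopf algebra with finite-dimensional homogeneous components is again a Hopf algebra. By Proposition~\ref{QB Hopf algebra} the tuple $(\QB, \bst, \dec)$ is a weight-graded commutative Hopf algebra, and since the set of words of a given weight is finite, each homogeneous component of $\QB$ is finite-dimensional. Its completed graded dual thus carries a canonical Hopf algebra structure; as a completed $R$-module it identifies with $\RBc{R}$, and the task reduces to checking that the resulting product and coproduct agree with $\conc$ and $\bco$ as defined.

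The product dual to the deconcatenation coproduct $\dec$ is immediately seen to be the concatenation product: unwinding definitions gives
\[
(\Phi\cdot\Psi \mid w) = \sum_{w=uv} (\Phi \mid u)(\Psi \mid v) = (\Phi \otimes \Psi \mid \dec(w))
\]
for all $w\in\B^*$, so $\conc$ is adjoint to $\dec$. For the coproduct, I would start with the values on single letters. A short induction using the defining recursion for $\bst$ shows that every word occurring in $u \bst v$ has length at least $\max(|u|, |v|)$, so the coefficient of $b_i$ in $u \bst v$ vanishes unless both $u$ and $v$ have length at most one. Reading off the nontrivial cases $(u,v) = (b_i,\1)$, $(\1, b_i)$, or $(b_j, b_{i-j})$ with $j, i-j \geq 1$ then yields exactly
\[
\bco(b_i) = \1 \otimes b_i + b_i \otimes \1 + \sum_{j=1}^{i-1} b_j \otimes b_{i-j},
\]
which identifies the dual coproduct on the single letter $b_i$.

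To extend this identification to arbitrary words I would invoke the bialgebra compatibility in $(\QB, \bst, \dec)$: the fact that $\dec$ is an algebra morphism with respect to $\bst$ dualizes to the statement that the coproduct dual to $\bst$ is an algebra morphism with respect to the concatenation product. This pins it down uniquely from its values on the generators $b_i$ and identifies it with $\bco$ as defined by multiplicative extension. The remaining properties then follow formally: cocommutativity of $\bco$ from commutativity of $\bst$, coassociativity from associativity of $\bst$, and the counit and antipode from the connected weight-graded structure. I expect the main step to be the single-letter coproduct calculation in the second paragraph; the rest is a routine transport of structure along the non-degenerate duality pairing.
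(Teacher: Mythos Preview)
Your proposal is correct and follows essentially the same approach as the paper: both arguments verify the duality of $\conc$ with $\dec$ directly, establish the duality of $\bst$ with $\bco$ by checking on single letters $b_i$ (identifying the pairs $(u,v)$ for which $b_i$ occurs in $u\bst v$), and then extend to all words via the multiplicativity of $\bco$ with respect to concatenation. The paper is slightly terser, but the logical skeleton is the same.
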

\begin{proof} We prove the duality of $(\QB,\bst,\dec)$ and $(\RBc{R},\conc,\bco)$ with respect to the given pairing. Then it is an immediate consequence that $(\RBc{R},\conc,\bco)$ is a cocommutative Hopf algebra. It is well-known that $\dec$ and $\conc$ are dual maps. For $u,v\in \QB$ one obtains
\begin{align*} 
\big(\bco(b_i)\mid u\otimes v\big)&=\Big(\1\otimes b_i+b_i\otimes\1+\sum_{j=1}^{i-1} b_i\otimes b_{i-j}\ \Big|\ u\otimes v\Big)=\big(b_i\mid u\bst v\big) 
\end{align*}
The last equality holds, since the word $b_i$ appears in the product $u\bst v$ if and only if $u=\1,\ v=b_i$ or $u=b_i,\ v=\1$ or $u=b_{j},\ v=b_{i-j}$ for some $j=1,\ldots,i-1$.
Since $\bco$ is compatible with the concatenation product by definition and the letters $b_i$ generate the algebra $(\RBc{ R},\conc$), we deduce that the maps $\bst$ and $\bco$ are dual.
\end{proof} \noindent
The antipode $S:\RBc{R}\to \RBc{R}$ of the Hopf algebra $(\RBc{R},\conc,\bco)$ is the anti-automorphism given by
\begin{align*} 
S(b_0)&=-b_0,  \\
S(b_a)&=\sum_{r=1}^a\sum_{\substack{j_1+\dots+j_r=a \\ j_1,\ldots,j_r\geq1}}(-1)^rb_{j_1}\cdots b_{j_r}, \qquad a\geq 1.
\end{align*}
We end this section by explaining how the balanced quasi-shuffle Hopf algebra combines the shuffle Hopf algebra $(\QX,\shuffle,\dec)$ and the stuffle Hopf algebra $(\QY,\ast,\dec)$ from Section \ref{MZV}. Straight-forward computations show the following.
\begin{prop} There are two surjective Hopf algebra morphisms
\begin{align*}
\left(\QB,\bst, \dec\right)&\longtwoheadrightarrow (\QX,\shuffle,\dec), \\ 
b_i&\longmapsto 
\begin{cases} 
x_i, \quad &i\in\{0,1\}, \\
0, \quad &i\geq2,
\end{cases}
\end{align*}
and
\begin{align*}
(\QB,\bst, \dec)&\longtwoheadrightarrow (\QY,\ast,\dec), \\ 
b_i&\longmapsto 
\begin{cases}
0, \quad &i=0, \\
y_i, \quad &i\geq1.
\end{cases}
\end{align*}
\end{prop}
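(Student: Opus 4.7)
I would treat the two maps simultaneously; call them $\pi_X$ and $\pi_Y$. Extend each $\QQ$-linearly to $\QB$ by letter-wise substitution. A key consequence is that any word containing a forbidden letter — $b_i$ with $i \geq 2$ for $\pi_X$, and $b_0$ for $\pi_Y$ — is sent to $0$. Surjectivity is then immediate. Compatibility with the deconcatenation coproduct is equally clear: both $\dec$ and $\pi_X, \pi_Y$ act position-wise on a basis of words, so $(\pi \otimes \pi) \circ \dec = \dec \circ \pi$ is a matter of inspection on each basis word. The only non-trivial task is to verify the algebra morphism property.

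I would prove $\pi(u \bst v) = \pi(u) \cdot \pi(v)$ by induction on $|u| + |v|$, using the recursion from Definition \ref{bst algebra} together with the recursive definitions of $\shuffle$ and $\ast$ recalled in Section \ref{MZV}. Writing $u = b_i u'$ and $v = b_j v'$, the product $b_i u' \bst b_j v'$ splits into three summands. For $\pi_X$, if $i, j \in \{0,1\}$ then the two iterated terms $b_i(u' \bst b_j v')$ and $b_j(b_i u' \bst v')$ match the shuffle recursion $x_i \pi_X(u') \shuffle x_j \pi_X(v')$ by the induction hypothesis, while the $\bst$-correction $b_{i+j}(u' \bst v')$ — present only for $i = j = 1$, where $i + j = 2$ — is killed by $\pi_X$, matching the absence of a correction in $\shuffle$. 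If $i \geq 2$ or $j \geq 2$, each of the three summands vanishes under $\pi_X$ (the terms whose leading letter is forbidden vanish trivially; the remaining iterated term reduces via the induction hypothesis to a shuffle product with a vanishing factor), matching $\pi_X(u) \shuffle \pi_X(v) = 0$.

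For $\pi_Y$, if $i, j \geq 1$ the two iterated terms match the stuffle recursion by induction, and the $\bst$-correction $b_{i+j}(u' \bst v')$ with $i + j \geq 2$ maps to the stuffle correction $y_{i+j}(\pi_Y(u') \ast \pi_Y(v'))$, again by induction. If $i = 0$ (the case $j = 0$ being symmetric), the $\bst$-correction is $0$ by the definition of $\bst$, the first iterated term $b_0(u' \bst b_j v')$ starts with $b_0$ and vanishes under $\pi_Y$, and the second iterated term $b_j(b_0 u' \bst v')$ reduces via the induction hypothesis to a product containing the vanishing factor $\pi_Y(b_0 u') = 0$; this matches $\pi_Y(u) \ast \pi_Y(v) = 0$.

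\textbf{Anticipated obstacle.} There is no genuine obstacle here; the content of the statement is the observation that the recursive definition of $\bst$ — in particular, suppressing the $b_{i+j}$ summand whenever one of the indices is $0$ — is precisely what allows both natural letter-wise projections onto the respective subalphabets to become algebra morphisms. The only care needed is in the case analysis across the four quadrants in $(i, j)$ for $\pi_Y$, and analogously for $\pi_X$ with the threshold at $2$; in each quadrant the match is immediate from the remarks above.
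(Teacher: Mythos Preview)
Your proposal is correct and matches what the paper intends: the paper does not actually give a proof but simply asserts that ``straight-forward computations show'' the proposition, and your inductive verification of the product compatibility together with the obvious checks for $\dec$ and surjectivity is precisely that straightforward computation written out. One minor point you could add for completeness is the trivial base case $u=\1$ or $v=\1$, and the remark that compatibility with the antipode is automatic for connected graded bialgebras; neither is a gap in substance.
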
 \noindent
By duality, we also obtain two injective Hopf algebra morphisms
\begin{align}  \label{def theta_X}
\theta_\X:(\RXc{R},\conc,\co)&\longhookrightarrow (\RBc{R},\conc,\bco),\\ 
x_i&\longmapsto b_i, \quad i\in\{0,1\}, \nonumber
\end{align}
and 
\begin{align} \label{def theta_Y}
\theta_\Y:(\RYc{R},\conc,\stco)&\longhookrightarrow (\RBc{R},\conc,\bco),\\ 
y_i&\longmapsto b_i,\quad i\geq1. \nonumber
\end{align}
\begin{rem} \label{shuffle, stuffle and q-stuffe sequence} The stuffle Hopf algebra $(\QY,\ast,\dec)$ can be identified with the Hopf subalgebra of $(\QB,\bst,\dec)$ spanned by all words which do not contain the letter $b_0$. This leads to an injective Hopf algebra morphism
\begin{align*}
(\QY,\ast,\dec)&\longhookrightarrow	(\QB,\bst, \dec) , \\ 
y_i&\longmapsto b_i, \quad i\geq 1.
\end{align*}
On the other hand, we have $b_1\bst b_1=2b_1^2+b_2$, and thus the words containing only the letters $b_0$ and $b_1$ do not span a Hopf subalgebra of $(\QB,\bst,\dec)$. So, the shuffle Hopf algebra $(\QX,\shuffle,\dec)$ does not canonically embed into $(\QB,\bst,\dec)$. One obtains a sequence of Hopf algebras
\begin{align} \label{short sequence} 
0\longrightarrow (\QY,\ast,\dec)\longhookrightarrow (\QB,\bst,\dec)\longtwoheadrightarrow (\QX,\shuffle,\dec)\longrightarrow 0,
\end{align}
which is nearly exact (the only exceptions are the span of the words $b_1^n$, $n\geq 0$). 
\end{rem}

\section{The affine scheme $\BM$} \label{affine scheme BM}

\noindent
Similar to the case of formal multiple zeta values (Section \ref{MZV}), we assign to the algebra $\formal$ an affine scheme $\BM$. This affine scheme has values in the complete Hopf algebra $(\RBc{R},\conc,\bco)$ presented in Section \ref{balanced quasi-shuffle Hopf algebra}. 
\begin{defi} \label{def BM}
For each commutative $\QQ$-algebra $R$ with unit, denote by $\BM(R)$ the set of all non-commutative power series $\Phi$ in $\RBc{R}$ satisfying
\renewcommand{\arraystretch}{1,1}
\begin{center} \centering \begin{tabular}{cclc}
(i) & $(\Phi|b_0)$ & $=$ & $0$, \\ 
(ii) & $\bco(\Phi)$ & $=$ & $\Phi \otimes \Phi$, \\
(iii) & $\tau(\Pi_0(\Phi))$ & $=$ & $\Pi_0(\Phi)$. \\
\end{tabular} \end{center} \renewcommand{\arraystretch}{1}
Here $\Pi_0$ denotes the $R$-linear extension of the canonical projection $\QB \to \QB^0$, which is the identity on $\QB^0$ and maps all words starting with $b_0$ to $0$.
\vspace{0,2cm} \\
For all $\lambda,\mu,\nu\in R$, let $\BM_{(\lambda,\mu,\nu)}(R)$ be the subset of all $\Phi\in\BM(R)$ additionally satisfying
\[\hspace{3,6cm}\text{(iv)} \quad (\Phi\mid b_2)=\lambda,\quad (\Phi\mid b_4)=\mu,\quad (\Phi\mid b_6)=\nu.\]
We abbreviate $\BMo(R)=\BM_{(0,0,0)}(R)$.
\end{defi} \noindent
Condition (iv) is motivated by the observation that $\QQ[\symb(b_2),\symb(b_4),\symb(b_6)]$ is isomorphic to the algebra of quasi-modular forms (see \eqref{formal quasimod}). So one might expect that for any arbitrary choice of $\lambda,\mu,\nu$ the set $\BM_{\lambda,\mu,\nu}(R)$ is non-empty.
\begin{thm} \label{BM affine scheme} For every commutative $\QQ$-algebra $R$ with unit and $\lambda,\mu,\nu\in R$, there are bijections
\begin{align*} 
\BM(R)&\simeq \Hom(\formal,R),\\ \BM_{(\lambda,\mu,\nu)}(R)&\simeq\Hom\Big(\faktor{\formal\!}{\!\left( \symb(b_2)-\lambda,\ \symb(b_4)-\mu,\ \symb(b_6)-\nu\right)},R\Big).
\end{align*}
\end{thm}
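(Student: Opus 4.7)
The plan is to exhibit inverse bijections functorial in $R$, built from the duality between $(\QB,\bst,\dec)$ and $(\RBc{R},\conc,\bco)$ established in Proposition~\ref{dual Hopf algebra} together with the universal property of $\formal$ displayed in diagram~\eqref{universal property}. The second bijection will follow from the first by a routine quotient argument.

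Going from algebra morphisms to series, I would send $\varphi\in\Hom(\formal,R)$ to the power series
\[\Phi_\varphi := \sum_{w\in\B^*} \varphi(\symb(w))\, w \in \RBc{R}.\]
The three defining conditions of $\BM(R)$ then correspond exactly to the three defining relations of $\formal$: condition (i) is $\varphi(\symb(b_0))=0$; condition (iii) says $\varphi(\symb(\tau(w)))=\varphi(\symb(w))$ for $w\in\QB^0$; and condition (ii) is the statement that for all $u,v\in\QB$,
\[(\bco(\Phi_\varphi)\mid u\otimes v)\ =\ (\Phi_\varphi\mid u\bst v)\ =\ \varphi(\symb(u))\varphi(\symb(v))\ =\ (\Phi_\varphi\otimes\Phi_\varphi\mid u\otimes v),\]
where the first equality uses the duality of $\bst$ and $\bco$ from Proposition~\ref{dual Hopf algebra} and the middle equality uses that $\varphi$ is an algebra morphism for the $\bst$-product induced on $\formal$.

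Conversely, to $\Phi\in\BM(R)$ I would first associate the $\QQ$-linear map $\tilde\varphi_\Phi:\QB\to R$ defined by $w\mapsto (\Phi\mid w)$. Group-likeness (ii), read through the same duality, gives
\[\tilde\varphi_\Phi(u\bst v)\ =\ (\bco(\Phi)\mid u\otimes v)\ =\ (\Phi\otimes\Phi\mid u\otimes v)\ =\ \tilde\varphi_\Phi(u)\tilde\varphi_\Phi(v),\]
so $\tilde\varphi_\Phi$ is an algebra morphism $(\QB,\bst)\to R$. Conditions (i) and (iii) amount to $\tilde\varphi_\Phi(b_0)=0$ and $\tilde\varphi_\Phi(\tau(w)-w)=0$ for all $w\in\QB^0$, hence $\tilde\varphi_\Phi$ annihilates all generators of $\rel$. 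The universal property \eqref{universal property} then produces a unique algebra morphism $\varphi_\Phi:\formal\to R$ with $\varphi_\Phi(\symb(w))=(\Phi\mid w)$.

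The two constructions are mutually inverse by direct inspection on coefficients, $(\Phi_{\varphi_\Phi}\mid w)=\varphi_\Phi(\symb(w))=(\Phi\mid w)$ and $\varphi_{\Phi_\varphi}(\symb(w))=(\Phi_\varphi\mid w)=\varphi(\symb(w))$, and both assignments are clearly natural in $R$, giving the first bijection. For the second bijection, adding condition (iv) exactly corresponds to requiring $\varphi(\symb(b_2))=\lambda$, $\varphi(\symb(b_4))=\mu$, $\varphi(\symb(b_6))=\nu$, i.e.\ to $\varphi$ factoring through the quotient by the ideal $(\symb(b_2)-\lambda,\,\symb(b_4)-\mu,\,\symb(b_6)-\nu)$. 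The only conceptually substantial point in the whole argument is the equivalence between the coproduct-level group-likeness of $\Phi$ and the product-level $\bst$-multiplicativity of its coefficient functional; this is precisely what Proposition~\ref{dual Hopf algebra} provides, so the remainder of the proof is bookkeeping.
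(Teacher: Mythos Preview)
Your proof is correct and follows essentially the same route as the paper: the same coefficient-series bijection $\varphi\mapsto\sum_{w}\varphi(\symb(w))w$, the same use of Proposition~\ref{dual Hopf algebra} to translate group-likeness for $\bco$ into $\bst$-multiplicativity, and the same quotient argument for the second bijection. If anything you are slightly more explicit than the paper in verifying that the inverse assignment lands in $\Hom(\formal,R)$ via the universal property~\eqref{universal property}.
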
 \noindent
In particular, $\BM:\QQ\operatorname{-Alg}\to \operatorname{Sets}$ is an affine scheme represented by the algebra $\formal$ and $\BM_{(\lambda,\mu,\nu)}:\QQ\operatorname{-Alg}\to\operatorname{Sets}$ is an affine scheme represented by 
\[\faktor{\formal\!}{\!\left( \symb(b_2)-\lambda,\ \symb(b_4)-\mu,\ \symb(b_6)-\nu\right)}.\]
\begin{proof}
The first bijection is given by 
\begin{align*}
\chi:\Hom(\formal,R)&\longrightarrow \BM(R), \\
\varphi&\longmapsto \sum_{w\in\B^*} \varphi(\symb(w))w.
\end{align*} 
Let $\varphi:\formal\to R$ be a $\QQ$-algebra morphism. Since $\symb(b_0)=0$, we obtain \[(\chi(\varphi)|b_0)=0.\] 
The product in $\formal$ is induced by the balanced quasi-shuffle product $\bst$, thus we have \[(\chi(\varphi)|u\bst v)=(\chi(\varphi)|u)(\chi(\varphi)|v)\qquad  \text{for all } u,v\in \QB.\] 
From the duality of $\bst$ and $\bco$  with respect to the pairing $(-\mid -)$ (Proposition \ref{dual Hopf algebra}), we deduce 
\[(\bco(\chi(\varphi))\mid u\otimes v)=(\chi(\varphi)\mid u \bst v)=(\chi(\varphi)\mid u)\ (\chi(\varphi)\mid v)=(\chi(\varphi)\otimes \chi(\varphi)\mid u\otimes v)\]
for all $u,v\in \QB$. In particular, the power series $\chi(\varphi)$ is grouplike for $\bco$. Since $\tau$ maps words onto words, the $\tau$-invariance of the elements $\symb(w)$ for $w\in\QB^0$ implies \[\tau\big(\Pi_0(\chi(\varphi))\big)=\Pi_0(\chi(\varphi)).\] This shows that $\chi(\varphi)$ is contained in the set $\BM(R)$ and therefore the map $\chi$ is well-defined.
The inverse of $\chi$ is given by
\begin{align*}
\BM(R)&\longrightarrow \Hom(\formal,R),\\
\Phi&\longmapsto\Big(\symb(w)\mapsto (\Phi\mid w)\Big).
\end{align*}
It is an immediate consequence that $\chi$ also induces a bijection
\begin{align*}
\Big\{\varphi\in\Hom(\formal,R)\ \Big|\  \varphi(\symb(b_2))=\lambda,\ \varphi(\symb(b_4))=\mu,\ \varphi(\symb(b_6))=\nu\Big\} &\longrightarrow \BM_{(\lambda,\mu,\nu)}(R), \\
\varphi &\longmapsto f(\varphi).
\end{align*}
By the universal property of quotient spaces, the set on the left-hand side is in bijection to
\[\Hom\Big(\faktor{\formal\!}{\!\left( \symb(b_2)-\lambda,\ \symb(b_4)-\mu\ ,\symb(b_6)-\nu\right)},R\Big).\] 
\end{proof} \noindent
Applying the bijection in Theorem \ref{BM affine scheme} to the $\QQ$-algebra morphism $\formal\to \nonformal,\ \symb(w)\mapsto \zq^{\reg}(w)$ given in Theorem \ref{realization of formal} shows that
\[\sum_{w\in\B^*} \zq^{\reg}(w)w\in \BM_{G(2),G(4),G(6)}(\nonformal).\]
As before, $G(2),\ G(4),\ G(6)$ denote the classical Eisenstein series of weight $2,4,6$ with rational Fourier coefficients.

\section{Relation of the affine schemes $\DM$ and $\BM$ and consequences} \label{BM and DM}

\noindent
We show that the affine scheme $\DM$ assigned to formal multiple zeta values (Definition \ref{def DM}) embeds into the affine scheme $\BM$ (Definition \ref{def BM}). Therefore, we obtain a surjective algebra morphism from the algebra $\formal$ into the algebra $\Zf$.
\vspace{0,2cm} \\
Let $R$ be a commutative $\QQ$-algebra with unit. To relate the sets $\DM(R)$ and $\BM(R)$, we need the embeddings of the dual shuffle and stuffle Hopf algebra into $(\RBc{ R},\conc,\bco)$, those were defined in \eqref{def theta_X} and \eqref{def theta_Y} as
\begin{align*}
\theta_\X:(\RXc{ R},\conc,\co)&\longrightarrow (\RBc{ R},\conc,\bco),\ x_i\longmapsto b_i \quad (i\in\{0,1\}) \\ 
\theta_\Y:\hspace{0,15cm} (\RYc{ R},\conc,\stco)&\longrightarrow (\RBc{ R},\conc,\bco),\ y_i\longmapsto b_i \quad (i\geq 1).
\end{align*} 
To capture the fact that the map $\tau$ is an anti-morphism, we consider the following Hopf algebra anti morphism
\begin{align*} \label{def theta_X anti}
\theta_\X^{\anti}: (\RXc{ R},\conc,\co)&\longrightarrow (\RBc{ R},\conc,\bco),\\ x_{\varepsilon_1}\cdots x_{\varepsilon_n}&\longmapsto b_{\varepsilon_n}\cdots b_{\varepsilon_1}. \nonumber
\end{align*}
\begin{lem} \label{lem embedding DM BM} For the canonical projections $\Pi_0:\RBc{ R}\to \RBc{ R}^0$ (Definition \ref{def BM}) and $\Pi_\Y:\RXc{ R}\to\RYc{ R}$ (Definition \ref{def DM}), we have
\[\tau\circ\Pi_0\circ\theta_\X^{\anti}=\theta_\Y\circ\Pi_\Y.\]
\end{lem}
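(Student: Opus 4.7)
The plan is to verify the identity on a basis of $\RXc{R}$ by evaluating both sides on an arbitrary monomial $w=x_{\varepsilon_1}\cdots x_{\varepsilon_n}\in \X^*$ and extending $R$-linearly and by completion. The key observation is that the content of the lemma is really a finite, word-level combinatorial identity that unfolds once the definitions are written out; the only bookkeeping is matching indices through the formula for $\tau$.

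I would split into two cases according to the last letter of $w$. If $\varepsilon_n=0$, then $\Pi_\Y(w)=0$ by definition of $\Pi_\Y$, so the right-hand side vanishes. On the other hand, $\theta_\X^{\anti}(w)=b_{\varepsilon_n}\cdots b_{\varepsilon_1}=b_0\,b_{\varepsilon_{n-1}}\cdots b_{\varepsilon_1}$ begins with $b_0$, hence $\Pi_0(\theta_\X^{\anti}(w))=0$, and applying $\tau$ to $0$ gives $0$. So both sides agree.

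If $\varepsilon_n=1$, write $w$ in its canonical form
\[ w = x_0^{k_1-1}x_1\,x_0^{k_2-1}x_1\cdots x_0^{k_d-1}x_1 \]
with $d\geq1$ and $k_1,\ldots,k_d\geq 1$. Then by definition $\Pi_\Y(w)=y_{k_1}\cdots y_{k_d}$, so
\[ \theta_\Y(\Pi_\Y(w))=b_{k_1}\cdots b_{k_d}. \]
On the other side, reversal and the letter substitution $x_i\mapsto b_i$ give
\[ \theta_\X^{\anti}(w)=b_1\,b_0^{k_d-1}\,b_1\,b_0^{k_{d-1}-1}\cdots b_1\,b_0^{k_1-1}, \]
which starts with $b_1$, so $\Pi_0$ acts as the identity. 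To apply $\tau$, I read off the local parameters $(k'_i,m'_i)$ of this word in the notation of the definition of $\tau$: namely $k'_i=1$ for all $i$, and $m'_i=k_{d+1-i}-1$. The formula $\tau(b_{k'_1}b_0^{m'_1}\cdots b_{k'_d}b_0^{m'_d})=b_{m'_d+1}b_0^{k'_d-1}\cdots b_{m'_1+1}b_0^{k'_1-1}$ then collapses all the $b_0$-strings (since $k'_i-1=0$) and leaves exactly
\[ \tau(\Pi_0(\theta_\X^{\anti}(w))) = b_{m'_d+1}b_{m'_{d-1}+1}\cdots b_{m'_1+1} = b_{k_1}b_{k_2}\cdots b_{k_d}, \]
matching $\theta_\Y(\Pi_\Y(w))$.

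The only potential obstacle is the index reversal in the $\tau$ formula when applied to the reversed word produced by $\theta_\X^{\anti}$; this is just a careful relabeling of $(k'_i,m'_i)$, which I would check by inspecting one explicit small example (e.g.\ $w=x_0 x_1 x_1$) to confirm that $\tau$ undoes the reversal and the $k_i$'s wind up in the original order.
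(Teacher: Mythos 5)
Your proof is correct and follows essentially the same route as the paper: evaluate both sides on monomials, treat words ending in $x_0$ (both sides vanish) and words in the canonical form $x_0^{k_1-1}x_1\cdots x_0^{k_d-1}x_1$ separately, and check that $\tau$ applied to the reversed word $b_1b_0^{k_d-1}\cdots b_1b_0^{k_1-1}$ returns $b_{k_1}\cdots b_{k_d}$. Your explicit relabeling $k'_i=1$, $m'_i=k_{d+1-i}-1$ already settles the index reversal in general, so the concluding small example is not even needed.
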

\begin{proof} For a word $w=x_0^{k_1-1}x_1\cdots x_0^{k_d-1}x_1$ in $\RXc{ R}$ (where $k_1,\ldots,k_d\geq1$), we compute
\begin{align*}
(\tau\circ\Pi_0\circ \theta_\X^{\anti})(w)&=\tau(b_1b_0^{k_d-1}\cdots b_1b_0^{k_1-1})=b_{k_1}\cdots b_{k_d}=\theta_\Y(y_{k_1}\cdots y_{k_d})=(\theta_\Y\circ\Pi_\Y)(w).
\end{align*} 
On the other hand, if $w=vx_0$ for some word $v$ in $\RXc{R}$, we obtain
\begin{align*}
(\tau\circ\Pi_0\circ \theta_\X^{\anti})(w)=(\tau\circ\Pi_0)(b_0\theta_\X^{\anti}(v))=0=\Pi_\Y(vx_0)=(\theta_\Y\circ \Pi_\Y)(w)
\end{align*}
\end{proof}
\begin{thm} \label{DM into BM} For each commutative $\QQ$-algebra $R$ with unit, we have an injective map
\begin{align*} 
\theta: \DM(R)&\longhookrightarrow \BM(R), \\
\phi&\longmapsto\theta_\X^{\anti}(\phi)\theta_\Y(\phi_\ast),
\end{align*}
where we denote (as in Definition \ref{def DM}) \[\phi_\ast=\phi_{\operatorname{corr}}\Pi_\Y(\phi)=\exp\left(\sum\limits_{n\geq2}\frac{(-1)^{n-1}}{n}(\Pi_\Y(\phi)|y_n)y_1^n\right)\Pi_\Y(\phi)\in \RYc{ R}.\]
\end{thm}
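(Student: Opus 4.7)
The plan is to verify conditions (i)--(iii) of Definition \ref{def BM} for the element $\Phi := \theta_\X^{\anti}(\phi)\theta_\Y(\phi_\ast)$, and then establish the injectivity of $\theta$ as a separate matter; conditions (i) and (ii) follow quickly from the Hopf-algebraic setup, while condition (iii) is the main computation.

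For condition (ii), the map $\theta_\Y$ is a Hopf algebra morphism by \eqref{def theta_Y}, so $\theta_\Y(\phi_\ast)$ is $\bco$-grouplike because $\phi_\ast$ is $\stco$-grouplike by hypothesis. The map $\theta_\X^{\anti}$ is an algebra anti-morphism, but cocommutativity of $\co$ implies that $\theta_\X^{\anti}$ is simultaneously a \emph{coalgebra morphism}, so $\theta_\X^{\anti}(\phi)$ is $\bco$-grouplike as well. Since the product of two $\bco$-grouplike series is again grouplike, $\bco(\Phi) = \Phi \otimes \Phi$. For condition (i), the only decomposition $b_0 = uv$ with $u \in \{b_0,b_1\}^*$ and $v \in \{b_i : i \geq 1\}^*$ is $u = b_0$, $v = \1$, so $(\Phi|b_0) = (\theta_\X^{\anti}(\phi)|b_0)\cdot 1 = (\phi|x_0) = 0$.

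The central step is condition (iii). I would split $\theta_\X^{\anti}(\phi) = \1 + \theta_\X^{\anti}(\phi_{x_0}) + \theta_\X^{\anti}(\phi_{x_1})$ according to whether the underlying word of $\phi$ is empty, ends in $x_0$, or ends in $x_1$; the three summands consist of $\1$, of words starting with $b_0$, and of words starting with $b_1$ respectively. Since $\theta_\Y(\phi_\ast)$ contains no letter $b_0$, the projection $\Pi_0$ annihilates only the middle summand after multiplication by $\theta_\Y(\phi_\ast)$, giving
\[\Pi_0(\Phi) = \bigl(\1 + \theta_\X^{\anti}(\phi_{x_1})\bigr)\, \theta_\Y(\phi_\ast).\]
A direct check on the defining formula shows that $\tau$ is an anti-morphism for concatenation on $\QB^0$, and Lemma \ref{lem embedding DM BM} gives $\tau\bigl(\1 + \theta_\X^{\anti}(\phi_{x_1})\bigr) = \theta_\Y(\Pi_\Y(\phi))$, together with its inverse (via $\tau^2 = \id$). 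Writing $\phi_\ast = \phi_{\operatorname{corr}}\cdot \Pi_\Y(\phi)$ and noting that $\theta_\Y(\phi_{\operatorname{corr}})$ is a power series in the single letter $b_1$---hence fixed by $\tau$, as $\tau(b_1^n) = b_1^n$---the computation
\begin{align*}
\tau(\Pi_0(\Phi)) &= \tau(\theta_\Y(\phi_\ast))\cdot \tau\bigl(\1 + \theta_\X^{\anti}(\phi_{x_1})\bigr)\\
&= \tau(\theta_\Y(\Pi_\Y(\phi)))\cdot \theta_\Y(\phi_{\operatorname{corr}})\cdot \theta_\Y(\Pi_\Y(\phi))\\
&= \bigl(\1 + \theta_\X^{\anti}(\phi_{x_1})\bigr)\cdot \theta_\Y(\phi_\ast) = \Pi_0(\Phi)
\end{align*}
closes this step.

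For the injectivity of $\theta$, if $w \in \B^*$ ends in $b_0$, the only admissible splitting $w = uv$ with $u \in \{b_0,b_1\}^*$ and $v \in \{b_i : i \geq 1\}^*$ forces $v = \1$; hence $(\Phi|w) = (\theta_\X^{\anti}(\phi)|w)$, which vanishes unless $w \in \{b_0,b_1\}^*$, and in that case equals $(\phi|\widetilde w)$ for the reverse word $\widetilde w \in \X^*$, which starts with $x_0$. Thus $\Phi$ determines $(\phi|u)$ for every $u \in \X^*$ starting with $x_0$. Since $\phi$ is $\shuffle$-grouplike with $(\phi|x_0) = (\phi|x_1) = 0$, Radford's theorem expresses $\phi$ through its values on Lyndon words in $\{x_0 < x_1\}$, and every such Lyndon word of length $\geq 2$ begins with $x_0$, while the two length-one Lyndon words $x_0, x_1$ contribute zero by hypothesis. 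Hence $\phi$ is recovered from $\Phi$. The main obstacle in the whole argument is condition (iii): orchestrating the interplay between $\Pi_0$, the anti-morphism $\tau$, Lemma \ref{lem embedding DM BM}, and the explicit form of $\phi_{\operatorname{corr}}$ as a $b_1$-series is what makes the theorem non-trivial, even though each individual ingredient is elementary in isolation.
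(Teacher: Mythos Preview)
Your argument for conditions (i)--(iii) is essentially identical to the paper's: both use that $\theta_\X^{\anti}$ and $\theta_\Y$ are coalgebra morphisms for (ii), and both handle (iii) by combining the anti-morphism property of $\tau$, Lemma \ref{lem embedding DM BM}, and the $\tau$-invariance of the pure $b_1$-series $\theta_\Y(\phi_{\operatorname{corr}})$. Your explicit decomposition $\Pi_0(\theta_\X^{\anti}(\phi))=\1+\theta_\X^{\anti}(\phi_{x_1})$ is exactly what the paper writes as $\Pi_0(\theta_\X^{\anti}(\phi))$.

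For injectivity you take a genuinely different route. The paper asserts that every non-trivial word in $\theta_\Y(\phi_\ast)$ contains some letter $b_i$ with $i>1$, concludes that the $\{b_0,b_1\}$-part of $\theta(\phi)$ equals $\theta_\X^{\anti}(\phi)$, and then invokes injectivity of $\theta_\X^{\anti}$. Your approach---restricting to words ending in $b_0$ to read off $(\phi\mid u)$ for all $u\in\X^*$ beginning with $x_0$, then invoking the Lyndon basis of $(\QX,\shuffle)$ to reconstruct the grouplike $\phi$---is slightly less direct but more robust. In fact the paper's intermediate claim is not correct as stated: since $\phi_{\operatorname{corr}}$ contributes $y_1^n$-terms, one computes for example $(\phi_\ast\mid y_1^2)=(\phi_{\operatorname{corr}}\mid y_1^2)=-\tfrac{1}{2}(\phi\mid x_0x_1)$, which is generically nonzero, so $\theta_\Y(\phi_\ast)$ \emph{does} contain pure $b_1$-words. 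Your argument sidesteps this issue entirely, because a word ending in $b_0$ can never pick up a non-trivial contribution from the $\theta_\Y(\phi_\ast)$ factor regardless of its $b_1$-content.
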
  \noindent
The chosen order of the factors in the definition of $\theta$ is necessary for the compatibility of the projections $\Pi_\Y$ and $\Pi_0$ under the map $\theta$.
\begin{proof} Let $\phi\in \DM( R)$. We have $(\phi\mid x_0)=0$ and hence $(\theta(\phi)\mid b_0)=0$. Since $\theta_\X^{\anti}$, $\theta_\Y$ are coalgebra morphisms and $\phi$ and $\phi_\ast$ are grouplike for $\co$ and $\stco$, we compute
\begin{align*}
\bco\big(\theta(\phi)\big)&=\bco\big(\theta_\X^{\anti}(\phi)\big)\bco\big(\theta_\Y(\phi_\ast)\big)=\big(\theta_\X^{\anti}(\phi) \otimes \theta_\X^{\anti}(\phi)\big)\big(\theta_\Y(\phi_\ast) \otimes \theta_\Y(\phi_\ast)\big) \\
&=\theta(\phi) \otimes \theta(\phi).
\end{align*}
By Lemma \ref{lem embedding DM BM}, we obtain
\begin{align*}
\tau\Big(\Pi_0(\theta(\phi))\Big)&=\tau\Big(\Pi_0(\theta_\X^{\anti}(\phi))\theta_\Y(\phi_\ast)\Big)=\tau\Big(\theta_\Y(\phi_\ast)\Big)\tau\Big(\Pi_0(\theta_\X^{\anti}(\phi))\Big)\\
&=\tau\Big(\theta_\Y(\phi_\ast)\Big)\theta_\Y\Big(\Pi_\Y(\phi)\Big)=\tau\Big(\theta_\Y(\Pi_\Y(\phi))\Big)\tau\Big(\theta_\Y(\phi_{\operatorname{corr}})\Big)\theta_\Y\Big(\Pi_\Y(\phi)\Big) \\
&=\Pi_0\Big(\theta_\X^{\anti}(\phi)\Big)\theta_\Y\Big(\phi_{\operatorname{corr}}\Big)\theta_\Y\Big(\Pi_\Y(\phi)\Big)=\Pi_0\Big(\theta_\X^{\anti}(\phi)\Big)\theta_\Y(\phi_\ast) \\
&=\Pi_0\Big(\theta(\phi)\Big).
\end{align*}
Note that $\theta_\Y(\phi_{\operatorname{corr}})$ consists of the letter $b_1$ and is therefore $\tau$-invariant. We proved that $\theta(\phi)$ is an element in $\BM( R)$ and thus the map $\theta$ is well-defined. \\
Next, we show injectivity. The elements $\phi\in \DM( R)$ satisfy $(\phi\mid x_1)=0$ and hence also $(\phi\mid x_1^n)=0$ for all $n\geq1$. Thus, any non-trivial word in $\theta_\X^{\anti}(\phi)$ contains the letter $b_0$ and every non-trivial word in $\theta_\Y(\phi_\ast)$ contains a letter $b_i$, $i>1$. As $(\theta_\X^{\anti}(\phi)\mid\1)=(\theta_\Y(\phi)\mid \1)=1$, we deduce
\[\theta(\phi)=\theta_\X^{\anti}(\phi)\theta_\Y(\phi)=\theta_\X^{\anti}(\phi)+\theta_\Y(\phi)+\begin{pmatrix} \text{linear combinations of words containing} \\ \text{the letters } b_0 \text{ and } b_i \text{ for some } i>1\end{pmatrix}.\]
In particular, the part of $\theta(\phi)$ consisting of the letters $b_0,b_1$ is exactly $\theta_\X^{\anti}(\phi)$. Therefore, the injectivity of $\theta_\X^{\anti}$ implies the injectivity of $\theta$.
\end{proof} 
\begin{rem}
(i) Since the set $\DM(R)$ is non-empty for any commutative $\QQ$-algebra $ R$ with unit, the existence of the injective map in Theorem \ref{DM into BM} shows that $\BM(R)$ is non-empty.
\vspace{0,2cm} \\
(ii) By applying the defining conditions of $\DM_{\lambda}(R)$, one obtains that each $\phi\in \DM_{\lambda}(R)$ satisfies
\[(\phi\mid x_0^3x_1)=(\phi_\ast\mid y_4)=\frac{2}{5}\lambda^2,\qquad (\phi\mid x_0^5x_1)=(\phi_\ast\mid y_6)=\frac{8}{35}\lambda^3.\]
These numbers come from Euler's formula for the even zeta values, precisely one computes $\frac{\zeta(4)}{\zeta(2)^2}=\frac{2}{5}$ and $\frac{\zeta(6)}{\zeta(2)^3}=\frac{8}{35}$. Therefore, the map $\theta$ in Theorem \ref{DM into BM} restricts to an embedding
\[\theta:\DM_{\lambda}(R)\longhookrightarrow \BM_{\big(\lambda,\frac{2}{5}\lambda^2,\frac{8}{35}\lambda^3\big)}(R).\]
\end{rem} \noindent
Since $\theta:\DM\to \BM$ is an injective natural transformation of affine schemes, we obtain by applying Yoneda's Lemma a surjective morphism between the representing algebras.
\begin{thm} \label{surjection G^f to Z^f} There is a surjective algebra morphism 
\begin{align*} 
p:\hspace{0,4cm}\formal\ &\longrightarrow\hspace{1,3cm} \Zf,\\
\symb(w)\ &\longmapsto\sum_{\substack{w=uv \\ u\in\{b_0,b_1\}^*,\ v\in\{b_i\mid i\geq1\}^*}}\zeta^f_{\shuffle}\left((\theta_\X^{\anti})^{-1}(u)\right)\zeta^f_\ast\left(\theta_\Y^{-1}(v)\right) \qquad (w\in \B^*).
\end{align*}
\end{thm}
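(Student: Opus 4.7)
The plan is to apply the Yoneda paradigm to the injective natural transformation $\theta: \DM \hookrightarrow \BM$ furnished by Theorem \ref{DM into BM}. Since $\DM$ is represented by $\Zf$ (Theorem \ref{DM affine scheme}) and $\BM$ is represented by $\formal$ (Theorem \ref{BM affine scheme}), Yoneda's lemma identifies $\theta$ with a unique $\QQ$-algebra morphism $p: \formal \to \Zf$. Explicitly, one specializes to $R = \Zf$ and applies the bijection of Theorem \ref{DM affine scheme} to $\id_{\Zf} \in \Hom(\Zf,\Zf)$, obtaining the universal power series
\[\phi_{\mathrm{univ}} = \sum_{w \in \X^*} \zeta_{\shuffle}^f(w)\, w \ \in\ \DM(\Zf).\]
The image $\theta(\phi_{\mathrm{univ}}) \in \BM(\Zf)$ then corresponds under Theorem \ref{BM affine scheme} to $p$, characterized by $p(\symb(w)) = (\theta(\phi_{\mathrm{univ}})\mid w)$ for every $w \in \B^*$. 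No separate verification that $p$ descends to the quotient $\formal$ is needed: by Theorem \ref{DM into BM} we already have $\theta(\phi_{\mathrm{univ}}) \in \BM(\Zf)$, and elements of $\BM(\Zf)$ correspond to algebra morphisms out of $\formal$ by Theorem \ref{BM affine scheme}.

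To make $p$ explicit, one expands $\theta(\phi_{\mathrm{univ}}) = \theta_\X^{\anti}(\phi_{\mathrm{univ}})\, \theta_\Y(\phi_{\mathrm{univ},\ast})$ according to Theorem \ref{DM into BM}, where $\phi_{\mathrm{univ},\ast} = \sum_{v \in \Y^*} \zeta^f_\ast(v)\, v$. Since $\theta_\X^{\anti}$ sends each word in $\X^*$ to its reversal in $\{b_0, b_1\}^*$, while $\theta_\Y$ embeds $\Y^*$ bijectively as $\{b_i \mid i \geq 1\}^*$, the two factors are supported on words of complementary types. Reading off the coefficient of a word $w \in \B^*$ in the resulting concatenation product then amounts to summing over all factorizations $w = uv$ with $u \in \{b_0,b_1\}^*$ and $v \in \{b_i \mid i \geq 1\}^*$, which gives precisely the formula in the statement.

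Finally, for surjectivity, one exhibits the convergent formal multiple zeta values in the image of $p$. For $k_1 \geq 2$ and $k_2,\ldots,k_d \geq 1$, the word $w' = b_1 b_0^{k_d-1}\cdots b_1 b_0^{k_1-1}$ ends in $b_0$, so the only admissible decomposition is $v = \1$, $u = w'$, and the formula yields
\[p(\symb(w')) = \zeta_{\shuffle}^f(x_0^{k_1-1}x_1\cdots x_0^{k_d-1}x_1) = \zeta^f(k_1,\ldots,k_d).\]
Together with $p(\symb(\1)) = 1$, this places every convergent $\zeta^f(k_1,\ldots,k_d)$ in the image; since these values span $\Zf$ as a $\QQ$-vector space, $p$ is surjective. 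The one non-formal step I would flag as the main obstacle is precisely this last hands-on verification: surjectivity of $p$ does not follow automatically from injectivity of $\theta$ (the latter only delivers epi-ness in $\QQ\text{-}\mathrm{Alg}$, which is strictly weaker), so the concrete computation on convergent multi-indices is essential.
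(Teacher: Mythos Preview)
Your argument follows the same route as the paper: identify $\id_{\Zf}$ with the universal series $\sum_{w\in\X^*}\zeta_{\shuffle}^f(w)\,w\in\DM(\Zf)$, push it through $\theta$, expand the concatenation $\theta_\X^{\anti}(\phi)\,\theta_\Y(\phi_\ast)$, and read off the coefficient of $w$ to obtain the stated formula for $p$. The construction and the formula derivation match the paper's proof essentially line for line.

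The one point of divergence is the treatment of surjectivity. The paper simply asserts, just before the theorem, that injectivity of $\theta$ yields a surjective morphism of representing algebras ``by Yoneda's Lemma'', and offers no further justification in the proof body. You correctly flag that this inference is not automatic---an injective natural transformation between representables corresponds only to an epimorphism of commutative $\QQ$-algebras, which need not be surjective---and you supply a direct verification instead: for $k_1\geq 2$ the word $b_1 b_0^{k_d-1}\cdots b_1 b_0^{k_1-1}$ admits only the trivial decomposition $v=\1$, so $p$ sends it to $\zeta^f(k_1,\ldots,k_d)$, and these convergent values span $\Zf$. This explicit check is a genuine improvement over the paper's argument at this step.
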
 \noindent
Here $\zeta^f_\ast(u)$ denotes the stuffle regularized formal multiple zeta values and $\zeta^f_{\shuffle}(v)$ denotes the shuffle regularized formal multiple zeta values.
\vspace{0,1cm} \\
In other words, Theorem \ref{surjection G^f to Z^f} shows that there is a realization of the algebra $\formal$ into the algebra $\Z$ of multiple zeta values, given by the composition of the maps $p:\formal\to \Zf$ and $\Zf\to\Z,\ \zeta_{\shuffle}^f(w)\mapsto\zeta_{\shuffle}(w)$.
\begin{proof} The element $\id_{\Zf}\in \Hom(\Zf,\Zf)$ corresponds to the element $\sum\limits_{w\in \X^*} \zeta^f_{\shuffle}(w)w$ in $\DM(\Zf)$ under the bijection given in \eqref{bijections DM}. We obtain
\begin{align*}
\theta\Bigg(\sum_{w\in \X^*} \zeta^f_{\shuffle}(w) w\Bigg)&=\theta_\X^{\anti}\Bigg(\sum_{u\in \X^*} \zeta^f_{\shuffle}(u)u\Bigg)\theta_\Y\Bigg(\sum_{v\in \Y^*} \zeta^f_\ast(v)v\Bigg) \\
&= \sum_{u\in \X^*,\ v\in \Y^*} \zeta^f_{\shuffle}(u)\zeta^f_\ast(v)\theta_\X^{\anti}(u)\theta_\Y(v) \\
&= \sum_{u\in \{b_0,b_1\}^*,\ v\in \{b_i\mid i\geq 1\}^*} \zeta^f_{\shuffle}\Big((\theta_\X^{\anti})^{-1}(u)\Big)\zeta^f_\ast\Big(\theta_\Y^{-1}(v)\Big)uv.
\end{align*}
So under the bijection given in Theorem \ref{BM affine scheme} the element $\theta\Big(\sum\limits_{w\in \X^*} \zeta^f_{\shuffle}(w) w\Big)\in \BM(\Zf)$ corresponds to the algebra morphism
\begin{align*}
p:\hspace{0,4cm} \formal\ &\longrightarrow\hspace{1,2cm} \Zf, \\
\symb(w)\ &\longmapsto \sum_{\substack{w=uv \\ u\in\{b_0,b_1\}^*,\ v\in\{b_i\mid i\geq1\}^*}} \zeta^f_{\shuffle}\left((\theta_\X^{\anti})^{-1}(u)\right)\zeta^f_\ast\left(\theta_\Y^{-1}(v)\right) \qquad (w\in\B^*).
\end{align*}
By Yoneda's Lemma, this is exactly the algebra morphism induced by the natural transformation $\theta:\DM\to \BM$ of affine schemes.
\end{proof} \noindent
The map $p:\formal\to \Zf$ can be seen as a formal limit $q\to1$. For example, one computes
\[p(\symb(b_2b_3))=\zeta_{\shuffle}^f(\1)\zeta_\ast^f(y_2y_3)=\zeta^f(2,3),\]
and similarly
\[\lim_{q\to1} (1-q)^5\zq(2,3)=\zeta(2,3).\]
In \cite[Theorem 4.18]{bi} it is proven in a slightly different context that this holds in general. Interpreting the algebra $\formal$ as a formalization of multiple Eisenstein series (see Section \ref{nonformal}), the map $p:\formal\to \Zf$ can also be seen as a formal version of taking the constant term.

\noindent\vspace{-0,5cm} \\
\paragraph{\textbf{The algebra $\formal$ and (extended) double shuffle relations.}} We indicate how to obtain a variant of the double shuffle relations in the algebra $\formal$, which reduce to the double shuffle relations under the surjective algebra morphism $p:\formal\to \Zf$. \\
On the one hand, we can multiply the elements $\symb(w)$ in the algebra $\formal$ with respect to the balanced quasi-shuffle product $\bst$. On the other hand, we can first apply the $\tau$-invariance to both factors, then multiply with respect to the balanced quasi-shuffle product, and finally again apply the $\tau$-invariance to all terms. 
\begin{ex} The previous described procedure yields for all $k_1,k_2\geq1,\ m_1,m_2\geq0$
\begin{align*}
&\symb(b_{k_1}b_0^{m_1})\symb(b_{k_2}b_0^{m_2})\\ 
&=\sum_{j=0}^{m_1+m_2} \left(\binom{m_1+m_2-j}{m_2}\symb(b_{k_1}b_0^{j}b_{k_2}b_0^{m_1+m_2-j})+\binom{m_1+m_2-j}{m_1}\symb(b_{k_2}b_0^jb_{k_1}b_0^{m_1+m_2-j})\right)\\
&\hspace{0,7cm}+\binom{m_1+m_2}{m_1}\symb(b_{k_1+k_2}b_0^{m_1+m_2}) \\
&=\sum_{j=1}^{k_1+k_2-1} \left(\binom{j-1}{k_1-1}\symb(b_jb_0^{m_1}b_{k_1+k_2-j}b_0^{m_2})+\binom{j-1}{k_2-1}\symb(b_jb_0^{m_2}b_{k_1+k_2-j}b_0^{m_1})\right) \\
&\hspace{0,7cm}+\binom{k_1+k_2-2}{k_1-1}\symb(b_{k_1+k_2-1}b_0^{m_1+m_2+1}).
\end{align*}
In particular, we get in the case $m_1=m_2=0$ that
\begin{align} \label{eq1}
\symb(b_{k_1})\symb(b_{k_2})&=\symb(b_{k_1}b_{k_2})+\symb(b_{k_2}b_{k_1})+\symb(b_{k_1+k_2}) \\
&=\sum_{j=1}^{k_1+k_2-1} \left(\binom{j-1}{k_1-1}+\binom{j-1}{k_2-1}\right)\symb(b_jb_{k_1+k_2-j})+\binom{k_1+k_2-2}{k_1-1}\symb(b_{k_1+k_2-1}b_0).
\end{align}
Applying the algebra morphism $p:\formal\to\Zf$ from Theorem \ref{surjection G^f to Z^f} yields for $k_1,k_2\geq2$
\begin{align*}
\zeta^f(k_1)\zeta^f(k_2)&=\zeta^f(k_1,k_2)+\zeta^f(k_2,k_1)+\zeta^f(k_1+k_2) \\
&=\sum_{j=2}^{k_1+k_2-1} \left(\binom{j-1}{k_1-1}+\binom{j-1}{k_2-1}\right)\zeta^f(j,k_1+k_2-j).
\end{align*}
So we recover the double shuffle relations in depth $2$. We also obtain relations between formal multiple zeta values coming from regularization. For example, \eqref{eq1} in the case $k_1=1,\ k_2=2$ reads
\begin{align*}
\symb(b_1)\symb(b_2)&=\symb(b_1b_2)+\symb(b_2b_1)+\symb(b_3) \\
&=\symb(b_1b_2)+2\symb(b_2b_1)+\symb(b_2b_0),
\end{align*}
and applying the surjection $p:\formal\to\Zf$ gives
\begin{align*}
\Big(\zeta_{\shuffle}^f(1)+\zeta_\ast^f(1)\Big)\zeta_\ast^f(2)&=\zeta_\ast^f(1,2)+\zeta_{\shuffle}^f(1)\zeta_\ast^f(2)+\zeta_\ast^f(2,1)+\zeta_\ast^f(3) \\
&=\zeta_\ast^f(1,2)+\zeta_{\shuffle}^f(1)\zeta_\ast^f(2)+2\zeta_\ast^f(2,1)+0.
\end{align*}
Then cancellation yields Euler's famous relation $\zeta^f(3)=\zeta^f(2,1)$.
\vspace{0,1cm} \\
There are several examples in the literature of how to deduce relations between multiple zeta values from the relations in $\nonformal$ (or more generally, from relations between $q$-analogs of multiple zeta values), see for example \cite[Section 7.6]{bk2}, \cite{bra}, or \cite[Section 3.4]{si}. The approach given here allows a very algebraic view on the extended double shuffle relations of multiple zeta values. 
\end{ex} 

\noindent\vspace{-0,5cm} \\
\paragraph{\textbf{Outlook.}} By linearizing the defining equations of $\BMo$ one obtains a vector space $\bmo$. Since we expect $\BMo$ to be an affine group scheme, the linearized space $\bmo$ should be equipped with a Lie algebra structure. In \cite{bu}, the $q$-Ihara bracket is introduced, which is experimentally shown to preserve the space $\bmo$ in small weights. In particular, the $q$-Ihara bracket should give rise to the group multiplication on $\BMo$ and an exponential map $\exp:\widehat{\bmo}\to\BMo$. Consequences of this would be that the algebra $\formal$ is a free polynomial algebra and that the quotient $\formal/(\symb(b_2),\symb(b_4),\symb(b_6))$ is a Hopf algebra, where the coproduct should be seen as a generalization of Goncharov's coproduct. This is illustrated in more detail in \cite{bu} and will be part of forthcoming work.

\end{document}